\documentclass[reqno, 11pt]{amsart}
\usepackage[dvipsnames,usenames]{color}
\usepackage[colorlinks=true, urlcolor=NavyBlue, linkcolor=NavyBlue, citecolor=NavyBlue]{hyperref}
\usepackage{graphicx}
\usepackage{epsfig}
\usepackage[latin1]{inputenc}
\usepackage{amsmath}
\usepackage{amsfonts}
\usepackage{amssymb}
\usepackage{amsthm}
\usepackage{amscd}
\usepackage{verbatim}
\usepackage{subfigure}
\usepackage{pinlabel}
\usepackage{stmaryrd}
\usepackage{enumerate, enumitem}
\usepackage{multirow}

\usepackage{tikz}
\usetikzlibrary{arrows}
\usetikzlibrary{decorations.pathreplacing}
\usepackage{verbatim}
	
    \oddsidemargin  0.0in
    \evensidemargin 0.0in
    \textwidth      6.5in
    \headheight     0.0in
    \topmargin      0.0in
    \textheight=8.5in

\newtheorem{theorem}{Theorem}[section]
\newtheorem{Theorem}{Theorem}
\newtheorem{lemma}[theorem]{Lemma}
\newtheorem{proposition}[theorem]{Proposition}

\newtheorem{corollary}[theorem]{Corollary}

\theoremstyle{definition}
\newtheorem{definition}[theorem]{Definition}

\theoremstyle{remark}
\newtheorem{remark}[theorem]{Remark}
\newtheorem{example}[theorem]{Example}

\def\F{\mathbb{F}}
\def\N{\mathbb{N}}
\def\Q{\mathbb{Q}}
\def\R{\mathbb{R}}
\def\bbT{\mathbb{T}}
\def\Z{\mathbb{Z}}


\def\cC{\mathcal{C}}

\def\cH{\mathcal{H}}

\def\cM{\mathcal{M}}

\def\gr{\mathrm{gr}}

\def\bfx{\mathbf{x}}
\def\bfy{\mathbf{y}}

\def\mfs{\mathfrak{s}}
\def\mft{\mathfrak{t}}

\def\CFKi{CFK^{\infty}}
\def\CFKh{\widehat{CFK}}
\def\cCFK{\mathcal{CFK}}


\def\bfalpha{\boldsymbol{\alpha}}
\def\bfbeta{\boldsymbol{\beta}}

\def\horz{\textup{horz}}
\def\vert{\textup{vert}}
\def\min{\textup{min}}
\def\max{\textup{max}}

\def\dim{\textup{dim}}

\def\sgn{\textup{sgn}}

\def\rank{\operatorname{rank}}
\def\coker{\operatorname{coker}}
\def\sgn{\operatorname{sgn}}

\def\d{\partial}
\def\varep{\varepsilon}
\def\Vl{\underline{V}\,_0}
\def\Vu{\overline{V}_0}
\def\dl{\underline{d}}
\def\du{\overline{d}}

\newcommand{\co}{\mskip0.5mu\colon\thinspace}

\title{A survey on Heegaard Floer homology and concordance}

\subjclass[2013]{}
\author[Jennifer Hom]{Jennifer Hom}
\thanks{The author was partially supported by NSF grants DMS-1128155, DMS-1307879, and a Sloan Research Fellowship.}
\address {School of Mathematics, Georgia Institute of Technology, Atlanta, GA 30332}
\address{School of Mathematics, Institute for Advanced Study, Princeton, NJ 08540}
\email{hom@math.gatech.edu}

\numberwithin{equation}{section}

\begin{document}

\begin{abstract}
In this survey article, we discuss several different knot concordance invariants coming from the Heegaard Floer homology package of Ozsv\'ath and Szab\'o. Along the way, we prove that if two knots are concordant, then their knot Floer complexes satisfy a certain type of stable equivalence.
\end{abstract}

\maketitle

\section{Introduction}
Two knots, $K_1$ and $K_2$, are \emph{concordant} if they cobound a smooth, properly embedded cylinder in $S^3 \times [0, 1]$. The \emph{knot concordance group}, denoted $\cC$, consists of knots in $S^3$ modulo concordance, with the operation induced by connected sum. See \cite{Livingston} for a survey of knot concordance. In this article, we discuss several concordance invariants coming from the Heegaard Floer homology package of Ozsv\'ath and Szab\'o \cite{OS3manifolds1}.

Heegaard Floer homology has proved to be an effective tool in understanding three-manifolds and knots inside of them. For example, Heegaard Floer homology detects fiberedness of knots and three-manifolds \cite{Ghiggini, Ni, Nifibred3}, and bounds the four-ball genus and unknotting number \cite{OS4ball, OSunknotting}. 
In its simplest form, the theory associates to every closed three-manifold $Y$ a graded chain complex $\widehat{CF}(Y)$, whose chain homotopy type is an invariant of the manifold; more refined invariants, such as $CF^-(Y)$, associate a complex of $\F[U]$-modules to each three-manifold. The rank and gradings of the homology of these chain complexes contain geometric information about the manifold. Furthermore, a four-manifold cobordism between two closed three-manifolds induces a homomorphism between the Heegaard Floer homology groups of the three-manifolds.

There are many generalizations and refinements of Heegaard Floer homology. For example, a knot $K$ in a three-manifold $Y$ induces a filtration on $\widehat{CF}(Y)$, and the homology of the associated graded object is known as knot Floer homology \cite{OSknots, R}. This invariant categorifies the Alexander polynomial in the sense that the Alexander polynomial is the graded Euler characteristic of knot Floer homology. The knot $K$ similarly induces a filtration on the more refined invariant $CF^-(Y)$. As we will see, many concordance invariants can be extracted from this filtered chain complex and the closely related $\Z \oplus \Z$-filtered complex $\CFKi(K)$.

The filtered chain homotopy type of $\CFKi(K)$ is an invariant of the isotopy type of $K$. We will be interested in the properties of $\CFKi(K)$ which are invariants of the concordance class of $K$. For example, the integer-valued concordance invariant $\tau$ of Ozsv\'ath and Szab\'o \cite{OS4ball}, the $\{-1, 0, 1\}$-valued concordance invariant $\varep$ of the author \cite{Homsmooth}, and the $\Upsilon(t)$ invariant of Ozsv\'ath-Stipsicz-Szab\'o \cite{OSS} are all determined by $\CFKi(K)$. Indeed, although not the original proof, the fact that these invariants are all concordance invariants can be seen as a corollary of the following theorem.

\begin{Theorem}\label{thm:main}
If $K_1$ and $K_2$ are concordant, then we have the following filtered chain homotopy equivalence:
\[ \CFKi(K_1) \oplus A_1 \simeq \CFKi(K_2) \oplus A_2, \]
where $A_1, A_2$ are acyclic, i.e., $H_*(A_1)=H_*(A_2)=0$. In particular, if $K$ is slice, then 
\[ \CFKi(K) \simeq \CFKi(U) \oplus A, \]
where $U$ denotes the unknot and $A$ is acyclic.
\end{Theorem}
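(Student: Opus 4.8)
The plan is to produce filtered, $U$-equivariant chain maps in both directions between $\CFKi(K_1)$ and $\CFKi(K_2)$ that become isomorphisms once the filtration is forgotten, and then to feed these into a purely algebraic stable-equivalence lemma that packages them as the desired splitting. The general statement \emph{is} stable equivalence, so the bulk of the work is to produce the two maps with the right properties and then to carry out the homological algebra; the ``in particular'' slice statement will then follow by a cancellation argument.

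First I would extract maps from the concordance. Write $C \subset S^3 \times [0,1]$ for the concordance from $K_1$ to $K_2$, and let $\bar C$ be the same cylinder read backwards, a concordance from $K_2$ to $K_1$. A concordance is a link cobordism whose underlying three-manifold cobordism is the product $S^3 \times [0,1]$, so the associated cobordism maps give filtered $U$-equivariant chain maps $F \co \CFKi(K_1)\to\CFKi(K_2)$ and $G \co \CFKi(K_2)\to\CFKi(K_1)$. Concretely one can construct and verify these via the large surgery formula: performing $n$-surgery (for $n$ large) along $C$ inside $S^3\times[0,1]$ yields a homology cobordism $W$ from $S^3_n(K_1)$ to $S^3_n(K_2)$ carrying a distinguished $\mathrm{spin}^c$ structure over each $[i]$, and the identification $\widehat{HF}(S^3_n(K),[i])\cong H_*(\hat A_i(K))$ of each graded piece with the homology of a subquotient of $\CFKi(K)$ lets the cobordism maps of $W$ and $\bar W$ assemble into $F$ and $G$.

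Next I would establish the two properties these maps must satisfy. Because $F$ and $G$ are induced by cobordisms they respect the $\Z\oplus\Z$-filtration and commute with $U$; and because the ambient three-manifold cobordism is a product (equivalently, $W$ and $\bar W$ are homology cobordisms), each induces an isomorphism on $HF^\infty(S^3)\cong\F[U,U^{-1}]$, that is, on $H_*(\CFKi)$ after inverting $U$. Tracking the composite concordance $C\cup\bar C$, which is concordant to the product concordance on $K_1$, I would argue that $G\circ F$ and $F\circ G$ are isomorphisms on $U$-localized homology, hence filtered quasi-isomorphisms in the localized sense.

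The algebraic heart is where I expect the real difficulty. I would prove a general lemma: if $C_1,C_2$ are finitely generated, free, $\Z\oplus\Z$-filtered $\F[U]$-complexes with $H_*\cong\F[U,U^{-1}]$, and there exist filtered $U$-equivariant maps $F\co C_1\to C_2$ and $G\co C_2\to C_1$ each inducing an isomorphism on $U$-localized homology, then $C_1\oplus A_1\simeq C_2\oplus A_2$ for acyclic $A_1,A_2$. The argument reduces each $C_i$ to a filtered basis (a reduced model), then uses $F$ and $G$ to build an explicit filtered homotopy equivalence between the reduced models after adjoining acyclic summands that absorb the failure of $G\circ F$ to equal the identity on the nose; the subtlety is keeping every cancellation and every adjoined acyclic piece compatible with the filtration levels, which is the main obstacle. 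Finally, the slice case is obtained by taking $K_2=U$: since $\CFKi(U)=\F[U,U^{-1}]$ is a single generator with trivial differential, a Krull--Schmidt argument for reduced models (they are unique up to filtered isomorphism, and $\CFKi(U)$ is the unique indecomposable summand with nonzero homology) lets me cancel the acyclic pieces and conclude $\CFKi(K)\simeq\CFKi(U)\oplus A$ with $A$ acyclic.
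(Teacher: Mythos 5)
Your overall architecture (maps in both directions induced by the concordance, plus an algebraic lemma converting ``filtered quasi-isomorphisms both ways'' into stable equivalence) is a coherent alternative to the paper, but both of its pillars are left genuinely unproved, and one of them fails in the form you sketch. First, the geometric step: the paper uses no functoriality of $\CFKi$ under concordance, and the large-surgery mechanism you describe does not produce it. Theorem \ref{thm:largeNhat} and its $HF^-$ analogue identify the cobordism maps of $\pm W_N(K)$, one spin$^c$ structure at a time, with maps defined on the subquotient complexes $C\{\max(i,j-s)\leq 0\}$, and only on homology (or on the chain level up to homotopy). Assembling this collection, over all $s$, into a single $\Z\oplus\Z$-filtered, $U$-equivariant chain map $F\co \CFKi(K_1)\to\CFKi(K_2)$ is precisely the hard content of link-cobordism functoriality (later work of Zemke), not something the surgery formula hands you. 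The paper deliberately avoids this: all it extracts from the concordance is that $V_0(K)=V_0(-K)=0$ when $K$ is slice, which follows from the concordance invariance of the $d$-invariants of surgeries.

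Second, the algebraic heart. Your lemma is in fact true, but the mechanism you propose --- adjoining acyclic summands that ``absorb the failure of $G\circ F$ to equal the identity'' --- does not work as stated: the natural object here, the mapping cylinder of $F$, is an \emph{extension} of $\mathrm{Cone}(F)$ by $C_1$, not a direct sum, and splitting it off would require a filtered contraction of an acyclic piece; as the remark after the theorem (and Figure \ref{fig:figure8}) emphasizes, these acyclic complexes are typically \emph{not} filtered-acyclic, and filtered homotopies cannot be traded away freely. The proof that succeeds is structured oppositely to yours: one first handles the ``locally trivial'' case (Proposition \ref{prop:basis}), where $V_0(K)=0$ supplies a cycle $x_j \in C\{\max(i,j)\leq 0\}$ generating $H_*(C\{i\leq 0\})$, the dual condition $V_0(-K)=0$ guarantees $[x_j]\neq 0$ in $C(0,0)$, and an explicit filtered change of basis removes all incoming arrows, splitting off $\CFKi(U)$ \emph{on the nose}; the two-knot statement then follows by applying this to $K_1\#-K_2$ and tensoring with $\CFKi(K_2)$ via the K\"unneth formula. (Your lemma reduces to this same proposition: given $F$ and $G$, the evaluation and coevaluation maps make $C_1\otimes C_2^*$ locally trivial.) Note also that your order of argument is forced to invoke a Krull--Schmidt cancellation --- uniqueness of reduced models up to filtered isomorphism plus unique decomposability --- because your lemma with $C_2=\CFKi(U)$ only yields $\CFKi(K)\oplus A_1 \simeq \CFKi(U)\oplus A_2$, which is weaker than the theorem's unstabilized slice statement; this is yet another unproved ingredient that the paper's slice-case-first strategy renders unnecessary.
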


\begin{remark}
Note that in general, the $A_i$ are not acyclic as \emph{filtered} chain complexes. See, for example, Figure \ref{fig:figure8}.
\end{remark}

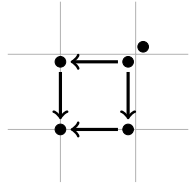
\begin{figure}[htb!]
\centering
\begin{tikzpicture}[scale=1.0]
	\draw[step=1, black!30!white, very thin] (-0.7, -0.7) grid (1.7, 1.7);
	
	\filldraw (0.9, 0.9) circle (2pt) node[] (a){};
	\filldraw (0.9, 0) circle (2pt) node[] (b){};
	\filldraw (0, 0.9) circle (2pt) node[] (c){};
	\filldraw (0, 0) circle (2pt) node[] (e){};
	\filldraw (1.1, 1.1) circle (2pt) node[] (x){};
	\draw [very thick, <-] (b) -- (a);
	\draw [very thick, <-] (c) -- (a);
	\draw [very thick, <-] (e) -- (b);
	\draw [very thick, <-] (e) -- (c);
\end{tikzpicture}
\caption{The knot Floer complex of the figure eight knot, which is generated over $\F[U, U^{-1}]$ by the five generators above, four of which generate an acyclic summand.}
\label{fig:figure8}
\end{figure}

\subsection*{Outline} In Section \ref{sec:HF}, we give a brief overview of Heegaard Floer homology for closed three-manifolds and the knot Floer complex for knots in $S^3$. In Section \ref{sec:concinvts}, we review the following concordance invariants coming from Heegaard Floer theory:
\begin{itemize}
	\item Ozsv\'ath-Szab\'o's $\Z$-valued invariant $\tau$ \cite{OS4ball},
	\item Ozsv\'ath-Szab\'o's $\Z$-valued invariant $\nu$ \cite{OSrational},
	\item the author's $\{-1, 0, 1\}$-valued invariant $\varep$ \cite{Homsmooth},
	\item the author and Wu's $\nu^+$ (equivalently, $\nu^-$) invariant \cite{HomWu}, based on work of Rasmussen \cite{R},
	\item the sequence of invariants $V_s$, coming from the surgery formula of Ozsv\'ath-Szab\'o \cite{OSinteger}, and the closely related $d$-invariants of surgery \cite{OSabsgr},
	\item Hendricks-Manolescu's $\Vl$ and $\Vu$ invariants, coming from involutive Heegaard Floer homology \cite{HendricksManolescu},
	\item $d$-invariants of branched covers (due to Manolescu-Owens \cite{ManolescuOwens} for the case of double-branched covers, and Jabuka \cite{Jabuka} for prime-powered branched covers),
	\item Ozsv\'ath-Stipsicz-Szab\'o's $\Upsilon(t)$ \cite{OSS}, which gives a homomorphism from $\cC$ to the group of piecewise-linear functions on the interval $[0,2]$.
\end{itemize}
The proof of Theorem \ref{thm:main} is in Subsection \ref{subsec:HFsurgeryinvts}. We discuss applications to concordance genus in Section \ref{sec:concgen}. We conclude with a comparison of some of these invariants in Section \ref{sec:comparison}. Throughout, we work with $\F=\Z/2\Z$ coefficients. 

\subsection*{Acknowledgements} I would like to thank Chuck Livingston and Kouki Sato for helpful discussions.

\section{The Heegaard Floer homology package}\label{sec:HF}


\subsection{Heegaard Floer homology}
We give a brief overview of the definition of Heegaard Floer homology. For more details, see the expository articles \cite{OSsurvey, OSsurvey2}, and for complete details, see \cite{OSknots}.

A \emph{pointed Heegaard diagram} is a tuple $\cH = (\Sigma, \bfalpha, \bfbeta, w)$ where
\begin{itemize}
	\item $\Sigma$ is a closed, oriented surface of genus $g$,
	\item $\bfalpha$ (respectively $\bfbeta$) is a $g$-tuple of pairwise disjoint circles $\alpha_1, \dots, \alpha_g$ (respectively $\beta_1, \dots, \beta_g$) in $\Sigma$,
	\item $w$ is a point in the complement of $\bfalpha$ and $\bfbeta$,
	\item $\bfalpha$ and $\bfbeta$ each span a $g$-dimensional subspace of $H_1(\Sigma; \Z)$.
\end{itemize}
We further require that the intersections between the $\alpha$- and $\beta$-circles be transverse. A pointed Heegaard diagram specifies a three-manifold: thicken $\Sigma$, attach thickened disks along each of the $\alpha$- and $\beta$-circles, and cap off each of the two remaining boundary components with three-balls.

To a pointed Heegaard diagram $\cH$ describing a three-manifold $Y$, we associate a chain complex of $\F[U]$-modules where $U$ is a formal variable. Since the chain homotopy type of this complex is independent of the choice of Heegaard diagram for $Y$, we will abuse notation slightly and denote this chain complex by $CF^-(Y)$.

Let $\textup{Sym}^g(\Sigma)$ denote the $g$-fold symmetric product of $\Sigma$, that is, 
\[ \textup{Sym}^g(\Sigma) = \Sigma \times \dots \times \Sigma / S_g, \]
where $S_g$ denotes the symmetric group on $g$ letters. Consider the subspaces
\[ \bbT_\alpha = \alpha_1 \times \dots \times \alpha_g \qquad \textup{ and } \qquad  \bbT_\beta = \beta_1 \times \dots \times \beta_g. \]
The chain complex is generated over $\F[U]$ by points of $\bbT_\alpha \cap \bbT_\beta$, or, equivalently, by unordered $g$-tuples of points in $\bfalpha \cap \bfbeta$ such that each $\alpha$- and each $\beta$-circle is used exactly once. As is standard, given $\bfx, \bfy \in\bbT_\alpha \cap \bbT_\beta$, we let $\pi_2(\bfx, \bfy)$ denote the space of homotopy classes of disks connecting $\bfx$ and $\bfy$. Associated to $\phi \in \pi_2(\bfx, \bfy)$ are two integers: the \emph{Maslov index} $\mu(\phi)$ and the \emph{multiplicity at} $w$ $n_w(\phi)$, which is the algebraic intersection number of $\phi$ with $\{w\} \times \textup{Sym}^{g-1}(\Sigma)$. See \cite[Section 2.1]{OSsurvey} for details.

The differential is
\[ \d^- \bfx = \sum_{\bfy \in \bbT_\alpha \cap \bbT_\beta} \sum_{\{ \phi \in \pi_2(\bfx, \bfy) \mid \mu(\phi)=1\} } \# \Big(\frac{\cM(\phi)}{\R}\Big) U^{n_w(\phi)} \bfy, \]
where $\cM(\phi)$ denotes the moduli space of pseudo-holomorphic representatives of $\phi$. (Note that when $b_1(Y)>0$, there is an admissibility condition for Heegaard diagrams; for the purposes of this survey, we will primarily be concerned with rational homology spheres.) When $\mu(\phi)=1$, the space $\cM(\phi)$ is one-dimensional, and $\# (\frac{\cM(\phi)}{\R})$ is the mod $2$ count of points in the factor space $\frac{\cM(\phi)}{\R}$. The complex $CF^-(Y)$ splits along spin$^c$ structures (which are in bijection with $H^2(Y; \Z)$) on $Y$ and we write the homology of $CF^-(Y)$ as
\[ HF^-(Y) = \bigoplus_{\mfs \in \textup{spin}^c(Y)} HF^-(Y, \mfs). \]

Let $Y$ be a rational homology sphere. We can define a relative $\Z$-grading on $CF^-(Y, \mfs)$ as follows:
\[ M(\bfx) - M(\bfy) = \mu(\phi) -2n_w(\phi) \]
where $\phi \in \pi_2(\bfx, \bfy)$. Multiplication by $U$ lowers the grading by 2, i.e.,
\[ M(U \bfx) = M(\bfx) - 2.\]
Moreover, this relative $\Z$-grading can be lifted to an absolute $\Q$-grading \cite[Theorem 7.1]{OSsmoothfour}. The \emph{correction term} or \emph{$d$-invariant}, $d(Y, \mfs)$, is the maximal $\Q$-grading of a non-$U$-torsion element in $HF^-(Y, \mfs)$ and is an invariant of spin$^c$ rational homology cobordism. (See \cite[Definition 1.1]{OSabsgr} for the definition of the spin$^c$ rational homology cobordism group.)

There is also a simpler flavor of Heegaard Floer homology, $\widehat{HF}$, which takes the form of a graded $\F$-vector space, rather than a graded $\F[U]$-module. The generators of the chain complex $\widehat{CF}$ are again points in $\bbT_\alpha \cap \bbT_\beta$, and the differential is now defined to be
\[ \widehat\d \bfx = \sum_{\bfy \in \bbT_\alpha \cap \bbT_\beta} \sum_{\{ \phi \in \pi_2(\bfx, \bfy) \mid \mu(\phi)=1, n_w(\phi)=0\} } \# \Big(\frac{\cM(\phi)}{\R}\Big) \bfy. \]
Equivalently, the complex $(\widehat{CF}, \widehat\d$) is obtained from $(CF^-, \d^-)$ by setting $U=0$. For a rational homology sphere $Y$, we have the inequality
\[ \dim \; \widehat{HF}(Y) \geq |H_1(Y; \Z)|. \]
If equality is achieved, then $Y$ is called an \emph{L-space}.

One can also consider the complex $(CF^\infty, \d^\infty)$, which is obtained from $(CF^-, \d^-)$ by tensoring over $\F[U]$ with $\F[U, U^{-1}]$, and the complex $(CF^+, \d^+)$, which is the quotient of $CF^\infty$ by $CF^-$. For a rational homology sphere $Y$, we have the isomorphism $HF^\infty(Y, \mfs) \cong \F[U, U^{-1}]$ (see \cite[Theorem 10.1]{OS3manifolds2} for the statement for $Y$ with $b_1(Y)\neq 0$), so $HF^\infty$ does not contain any new information, while $HF^\pm$ and $\widehat{HF}$ in general do.

\begin{example}
From the standard genus one Heegaard diagram for $S^3$, we see that
\[ HF^-(S^3) \cong \F[U], \qquad \widehat{HF}(S^3) \cong \F, \qquad \textup{ and } \qquad HF^\infty(S^3) \cong \F[U, U^{-1}]. \]
The absolute grading is defined such that $\F$ in both $HF^-(S^3)$ and $\widehat{HF}(S^3)$ has Maslov grading zero.
\end{example}

The Heegaard Floer homology package has the structure of a $(3+1)$-TQFT. That is, a four-manifold spin$^c$ cobordism $(W, \mft)$ between $Y_1$ and $Y_2$ induces a map 
\[ F^\circ_{W, \mft} \co HF^\circ(Y_1, \mfs_1) \rightarrow HF^\circ(Y_2, \mfs_2), \]
where $\circ$ can denote $-, +, \widehat{\;\;},$ or $\infty$ and $\mfs_i = \mft|_{Y_i}$. These maps will play a key role in Section \ref{subsec:HFsurgeryinvts}.

\subsection{The knot Floer complex}
A knot $K$ in a three-manifold $Y$ induces a filtration on $CF^\circ$, as in \cite{OSknots}. Here, we will restrict ourselves to the case of $Y=S^3$. For an expository overview of knot Floer homology, we refer the reader to \cite{Manolescuknots} and for complete details, see \cite{OSknots}.

A \emph{doubly pointed Heegaard diagram} for a knot $K \subset S^3$ is a tuple $\cH = (\Sigma, \bfalpha, \bfbeta,w, z)$ where $(\Sigma, \bfalpha, \bfbeta,w)$ is a pointed Heegaard diagram for $S^3$ and $z$ is a point in the complement of $\bfalpha$ and $\bfbeta$. The knot $K$ is specified by connecting $z$ to $w$ in the complement of the $\bfalpha$ disks and $w$ to $z$ in the complement of the $\bfbeta$ disks. See Figure \ref{fig:HDtrefoil}.

\begin{figure}[ht]
\centering
\labellist
\pinlabel {$w$} at 62 34
\pinlabel {$z$} at 77 16
\pinlabel {$a$} at 74 38
\pinlabel {$b$} at 65 20
\pinlabel {$c$} at 75 5
\endlabellist
\includegraphics[scale=1.5]{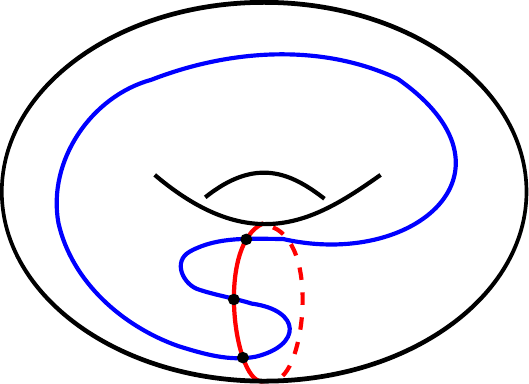}
\caption{A Heegaard diagram for the right-handed trefoil, $T_{2,3}$.}
\label{fig:HDtrefoil}
\end{figure}

The chain complex $CFK^-(K)$ is a freely generated chain complex over $\F[U]$ by points in $\bbT_\alpha \cap \bbT_\beta$, and the differential $\d^-$ is, as before,
\[ \d^- \bfx = \sum_{\bfy \in \bbT_\alpha \cap \bbT_\beta} \sum_{\{ \phi \in \pi_2(\bfx, \bfy) \mid \mu(\phi)=1\} } \# \Big(\frac{\cM(\phi)}{\R}\Big) U^{n_w(\phi)} \bfy. \]
The fact that $HF^-(S^3)=\F[U]_{(0)}$ (where the subscript $(0)$ denotes the grading of $1 \in \F[U]$) determines the absolute Maslov grading on $CFK^-(K)$.

The additional basepoint $z$ is used to define a $\Z$-filtration, called the \emph{Alexander filtration}, on $CFK^-$. Given generators $\bfx$ and $\bfy$ and $\phi \in \pi_2(\bfx, \bfy)$, the relative Alexander grading is defined to be
\[ A(\bfx)-A(\bfy)=n_z(\phi)-n_w(\phi). \]
The differential respects the Alexander filtration and multiplication by $U$ lowers the Alexander filtration by one:
\[ A(\d^- \bfx) \leq A(\bfx) \qquad \textup{ and } \qquad A(U \bfx) = A(\bfx)-1. \]
The absolute Alexander grading $s$ is determined by the fact that $\max \{s \mid \widehat{HFK}(K, s) \neq 0 \} = - \min  \{s \mid \widehat{HFK}(K, s) \neq 0 \}$, where $\widehat{HFK}(K,s)$ is as defined below.

By setting $U=0$ in $CFK^-$, we obtain the $\Z$-filtered chain complex $\widehat{CFK}$ and by tensoring over $\F[U]$ with $\F[U, U^{-1}]$, we obtain the $\Z$-filtered chain complex $\CFKi$, called the \emph{knot Floer complex}.
In addition to the Alexander filtration, there is a second filtration, called the \emph{algebraic filtration}, on $\CFKi$ given by the (negative of the) $U$-exponent; namely, $U^n \bfx$ has algebraic filtration level $-n$. Thus, the knot Floer complex $\CFKi$ has the structure of a $\Z \oplus \Z$-filtered chain complex. See \cite[Section 2.6]{OSrational} for more details on $\Z \oplus \Z$-filtered chain complexes. We denote the two filtrations in the plane, with the Alexander filtration along the vertical axis and the algebraic filtration along the horizontal axis. We suppress the Maslov grading from the picture.

\begin{example}
The knot Floer complex of the right-handed trefoil, $\CFKi(T_{2,3})$, is freely generated over $\F[U, U^{-1}]$ by $a, b,$ and $c$, as in Figure \ref{fig:HDtrefoil}. We have that $\d b= Ua + c$ and $\d a = \d c =0$. The Maslov gradings of the generators are $M(a)=0$, $M(b)=-1$, and $M(c)=-2$, and the Alexander gradings are $A(a)=1, A(b)=0$, and $A(c)=-1$. See Figure \ref{fig:CFKinftyT23}.
\end{example}

\begin{figure}[htb!]
\centering
\vspace{5pt}
\subfigure[]{
\begin{tikzpicture}[scale=1.2]
	\begin{scope}[thin, gray]
		\draw [<->] (-3, 0) -- (3, 0);
		\draw [<->] (0, -3) -- (0, 3);
	\end{scope}
	\draw[step=1, black!30!white, very thin] (-2.9, -2.9) grid (2.9, 2.9);
	\filldraw (1, 2) circle (2pt) node[] (a1){};
	\filldraw (2, 2) circle (2pt) node[] (b1){};
	\filldraw (2, 1) circle (2pt) node[] (c1){};
	\draw [very thick, <-] (a1) -- (b1);
	\draw [very thick, <-] (c1) -- (b1);
	\node [left] at (a1) {$U^{-1}a$};
	\node [above] at (b1) {$U^{-2}b$};
	\node [below] at (c1) {$U^{-2}c$};
	
	\filldraw (0, 1) circle (2pt) node[] (a2){};
	\filldraw (1, 1) circle (2pt) node[] (b2){};
	\filldraw (1, 0) circle (2pt) node[] (c2){};
	\draw [very thick, <-] (a2) -- (b2);
	\draw [very thick, <-] (c2) -- (b2);
	\node [left] at (a2) {$a$};
	\node [above] at (b2) {$U^{-1}b$};
	\node [below] at (c2) {$U^{-1}c$};	
	
	\filldraw (-1, 0) circle (2pt) node[] (a3){};
	\filldraw (0, 0) circle (2pt) node[] (b3){};
	\filldraw (0, -1) circle (2pt) node[] (c3){};
	\draw [very thick, <-] (a3) -- (b3);
	\draw [very thick, <-] (c3) -- (b3);
	\node [left] at (a3) {$Ua$};
	\node [above] at (b3) {$b$};
	\node [below] at (c3) {$c$};

	\filldraw (-2, -1) circle (2pt) node[] (a4){};
	\filldraw (-1, -1) circle (2pt) node[] (b4){};
	\filldraw (-1, -2) circle (2pt) node[] (c4){};
	\draw [very thick, <-] (a4) -- (b4);
	\draw [very thick, <-] (c4) -- (b4);
	\node [left] at (a4) {$U^2a$};
	\node [above] at (b4) {$Ub$};
	\node [below] at (c4) {$Uc$};
	
	\filldraw (2.5, 2.5) circle (0.8pt);
	\filldraw (2.6, 2.6) circle (0.8pt);
	\filldraw (2.7, 2.7) circle (0.8pt);
	
	\filldraw (-2.2, -2.2) circle (0.8pt);
	\filldraw (-2.3, -2.3) circle (0.8pt);
	\filldraw (-2.4, -2.4) circle (0.8pt);
\end{tikzpicture}
\label{fig:CFKinftyT23}
}
\\
\subfigure[]{
\begin{tikzpicture}[scale=1.2]
	\begin{scope}[thin, gray]
		\draw [<->] (-3, 0) -- (1, 0);
		\draw [<->] (0, -3) -- (0, 3);
	\end{scope}
	\draw[step=1, black!30!white, very thin] (-2.9, -2.9) grid (0.9, 2.9);
	
	\filldraw (0, 1) circle (2pt) node[] (a2){};
	\node [left] at (a2) {$a$};
	
	\filldraw (-1, 0) circle (2pt) node[] (a3){};
	\filldraw (0, 0) circle (2pt) node[] (b3){};
	\filldraw (0, -1) circle (2pt) node[] (c3){};
	\draw [very thick, <-] (a3) -- (b3);
	\node [left] at (a3) {$Ua$};
	\node [above] at (b3) {$b$};
	\node [below] at (c3) {$c$};

	\filldraw (-2, -1) circle (2pt) node[] (a4){};
	\filldraw (-1, -1) circle (2pt) node[] (b4){};
	\filldraw (-1, -2) circle (2pt) node[] (c4){};
	\draw [very thick, <-] (a4) -- (b4);
	\node [left] at (a4) {$U^2a$};
	\node [above] at (b4) {$Ub$};
	\node [below] at (c4) {$Uc$};

	\filldraw (-3, -2) circle (2pt) node[] (a5){};
	\filldraw (-2, -2) circle (2pt) node[] (b5){};
	\draw [very thick, <-] (a5) -- (b5);
	\node [left] at (a5) {$U^3a$};
	\node [above] at (b5) {$U^2b$};
	
	\filldraw (-2.6, -2.8) circle (0.8pt);
	\filldraw (-2.7, -2.9) circle (0.8pt);
	\filldraw (-2.8, -3) circle (0.8pt);
\end{tikzpicture}
\label{fig:CFKminusT23}
}
\hspace{100pt}
\subfigure[]{
\begin{tikzpicture}[scale=1.2]
	\begin{scope}[thin, gray]
		\draw [<->] (0, -3) -- (0, 3);
	\end{scope}
	\draw[step=1, black!30!white, very thin] (-0.1, -2.9) grid (0.1, 2.9);
	\filldraw (0, 1) circle (2pt) node[] (a){};
	\filldraw (0, 0) circle (2pt) node[] (b){};
	\filldraw (0, -1) circle (2pt) node[] (c){};
	\draw [very thick, <-] (c) -- (b);
	\node [left] at (a) {$a$};
	\node [left] at (b) {$b$};
	\node [left] at (c) {$c$};
\end{tikzpicture}
\label{fig:CFKhatT23}
}
\caption{Top, $\CFKi(T_{2,3})$. The arrows indicate the differential, e.g., $\d b = Ua + c$. Bottom left, $\oplus_{s \in \Z} C\{ i \leq 0, j= s\}$. Bottom right, the $\Z$-filtered chain complex $\CFKh(T_{2,3})=C\{i=0\}$. The Maslov gradings are $M(a)=0$, $M(b)=-1$, and $M(c)=-2$.}
\label{fig:}
\end{figure}
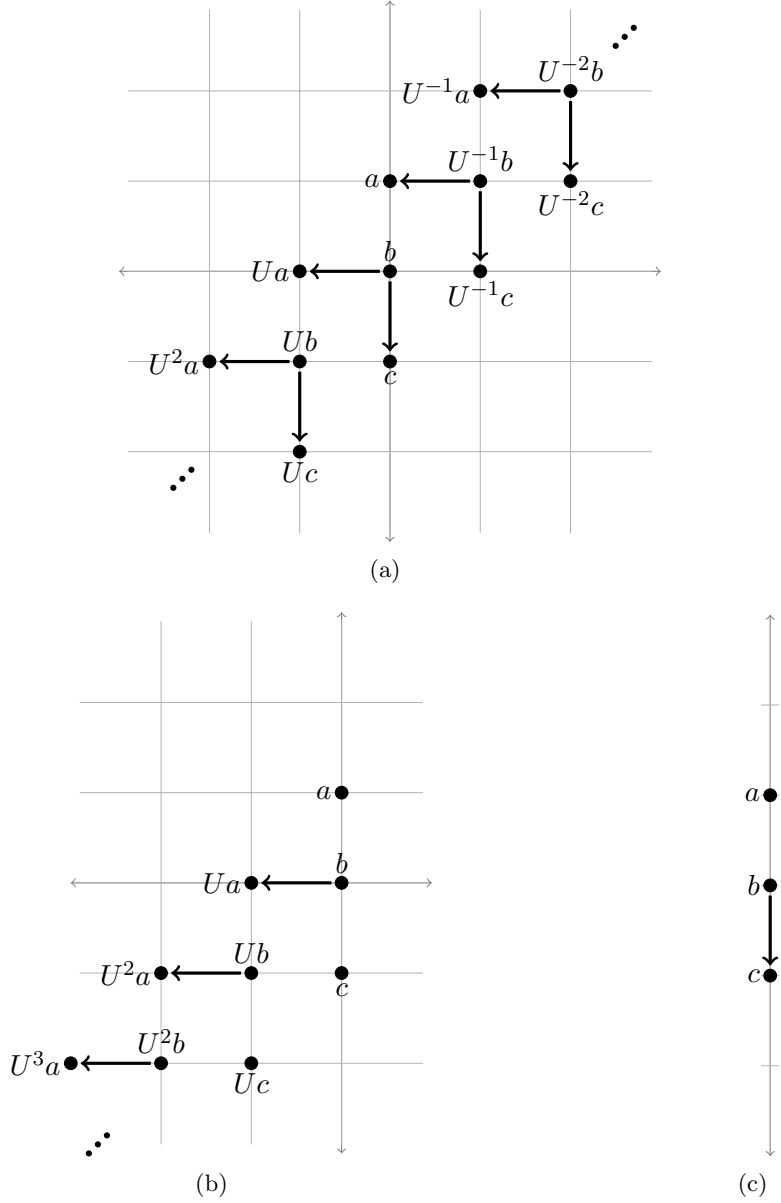

We will often be interested in various subcomplexes, quotient complexes, and subquotient complexes of $\CFKi(K)$. Given a subset $X \subset \Z \oplus \Z$, let $CX$ denote the complex generated by elements with filtration level $(i, j) \in X$. The complex $CX$ will naturally inherit the structure of a chain complex if it is a subquotient (i.e., a quotient of a subcomplex) of $\CFKi(K)$. Note that $CX$ is a filtered subcomplex exactly when $X$ has the property that $(i, j) \in X$ implies that $(i', j') \in X$ for all pairs $(i', j')$ such that $i' \leq i$ and $j' \leq j$.

Note that the subcomplex $C\{i \leq 0\}$, which is naturally $\Z$-filtered by the Alexander filtration, is equal to $CFK^-(K)$. In particular, $H_*(C\{i \leq 0\}) = HF^-(S^3) \cong \F[U]$. One may consider the associated graded complex, 
\begin{equation}\label{eqn:assocgr}
\bigoplus_{s \in \Z} C\{ i \leq 0, j=s\} = \bigoplus_{s \in \Z} C\{ i \leq 0, j\leq s\} / C\{ i \leq 0, j < s\},
\end{equation}
whose homology we denote by $HFK^-(K)$:
\begin{align*}
	HFK^-(K) &= \bigoplus_{s \in \Z} HFK^-(K,s) \\
	&= \bigoplus_{s \in \Z} H_*\Big(C\{ i \leq 0, j=s\}\Big).
\end{align*}
In passing to the associated graded complex, the Alexander filtration becomes the Alexander grading. There is a natural $U$-action 
\[ U \co HFK^-(K, s) \rightarrow HFK^-(K, s-1), \]
induced by the chain map
	\[ U \co C\{i \leq 0, j=s\} \rightarrow C\{i\leq 0, j=s-1\}. \]
Since $K$ induces a filtration on $CF^-(S^3)$, there is a spectral sequence from the associated graded complex in \eqref{eqn:assocgr} to $HF^-(S^3)$. The $E_1$ page of the spectral sequence is the homology of the associated graded complex, i.e., $HFK^-$.

At times, we will be interested in the \emph{horizontal differential} $\d^\horz$, that is, the part of the differential that preserves the Alexander filtration. Similarly, we will also be interested in the \emph{vertical differential} $\d^\vert$, that is, the part of the differential that preserves the algebraic filtration.

\begin{example}
From Figure \ref{fig:CFKminusT23}, we see that $HFK^-(T_{2,3},1)$ is generated by $a$ and that $HFK^-(T_{2,3},s)$ is generated by $U^{-s-1}c$ for $s \leq -1$. Thus, we have
\[	HFK^-(T_{2,3},s) = 
		\begin{cases}
			\F_{(0)} \quad & \text{if } s=1 \\
			\F_{(2s)} \quad & \text{if } s \leq -1 \\
			0 \quad & \text{otherwise}.
		\end{cases}
\]
The $U$-action maps $HFK^-(T_{2,3},s)$ isomorphically onto $HFK^-(T_{2,3},s-1)$ for $s \leq -1$ and is zero otherwise.
\end{example}

The subquotient complex $C\{i = 0\} = C\{i \leq 0\} / C\{i<0\}$, which is naturally $\Z$-filtered by the Alexander filtration, is equal to $\widehat{CFK}(K)$. We have that $H_*(C\{i = 0\}) = \widehat{HF}(S^3) \cong \F$. The homology of the associated graded object of $\widehat{CFK}(K)$ is
\begin{align*}
	 \widehat{HFK}(K) &= \bigoplus_{s \in \Z} \widehat{HFK}(K, s) \\
	 &= \bigoplus_{s \in \Z} H_*\Big(C\{i=0, j=s\}\Big)
\end{align*}
and is referred to as \emph{knot Floer homology}. There is a spectral sequence from the associated graded complex $\oplus_{s \in \Z}C\{i=0, j=s\}$ to $\widehat{HF}(S^3)$; the $E_1$ page is $\widehat{HFK}$. The fact that $H_*(\widehat{CFK}(K)) = \widehat{HF}(S^3) \cong \F$ will play a key role in the definition of the concordance invariant $\tau$ in Section \ref{sec:tau}. Knot Floer homology is bigraded, by the Maslov grading $M$ and the Alexander grading $s$, and the graded Euler characteristic equals the Alexander polynomial of $K$:
	\[ \Delta_K(t) = \sum_{M,s} (-1)^M t^s \rank \widehat{HFK}_M(K, s). \]
	
\begin{example}
From Figure \ref{fig:CFKhatT23}, we see that the differential on the associated graded complex is zero, since the only differential on $\widehat{CFK}(T_{2,3})$ decreases filtration. Thus,
\[	\widehat{HFK}(T_{2,3},s) = 
		\begin{cases}
			\F_{(0)} \quad & \text{if } s=1 \\
			\F_{(-1)} \quad & \text{if } s=0 \\
			\F_{(-2)} \quad & \text{if } s=-1 \\
			0 \quad & \text{otherwise}.
		\end{cases}
\]
\end{example}

Let $-K$ denote the reverse of the mirror image of $K$. We have that 
\[ \CFKi(-K) \simeq \CFKi(K)^* \]
where $\CFKi(K)^*$ denotes the dual of $\CFKi(K)$, i.e., $\textup{Hom}_{\F[U, U^{-1}]}(\CFKi(K), \F[U, U^{-1}])$, and
\[ \CFKi(K_1 \# K_2) \simeq \CFKi(K_1) \otimes_{\F[U, U^{-1}]} \CFKi(K_2), \]
by \cite[Section 3.5]{OSknots} and \cite[Theorem 7.1]{OSknots} respectively. These properties will be of importance at the end of Section \ref{sec:hatmaps}. In terms of illustrating $\CFKi(K)$ in the plane, as in Figure \ref{fig:CFKinftyT23}, we can obtain $\CFKi(K)^*$ by rotating the page $180^\circ$ and reversing the directions of all of the arrows. We also have that the $\Z \oplus \Z$-filtered chain complex obtained by reversing the roles of the Alexander and algebraic filtrations is filtered chain homotopic to the original complex, by \cite[Section 3.5]{OSknots}.

There are certain families of knots for which the knot Floer complex $\CFKi$ is completely determined by classical data. One is quasi-alternating knots, a generalization of alternating knots due to Ozsv\'ath-Szab\'o \cite{OSbranched}. For a quasi-alternating knot $K$, we have that $\tau(K) = -\frac{\sigma(K)}{2}$ (see Section \ref{sec:tau} for the definition of $\tau$) and that $\widehat{HFK}(K)$ is supported in a single diagonal with respect to the Alexander and Maslov gradings \cite{MOqa}. This uniquely specifies $\widehat{HFK}(K)$, and Petkova \cite[Theorem 4]{Petkova} further shows that this uniquely determines $\CFKi(K)$.

An \emph{L-space knot} is a knot $K \subset S^3$ which admits a positive L-space surgery. By \cite[Theorem 1.2]{OSlens}, the knot Floer homology of an L-space knot $K$ is completely determined by the Alexander polynomial of $K$. Moreover, grading considerations imply that in fact the full knot Floer complex $\CFKi(K)$ is determined by $\Delta_K(t)$; see, for example, \cite[Section 2.3]{HHN13} or \cite[Section 7]{HendricksManolescu} for a statement of this result.

\section{Concordance invariants}\label{sec:concinvts}

\subsection{The Ozsv\'ath-Szab\'o $\tau$ invariant}\label{sec:tau}
Given a $\Z$-filtered chain complex with total homology of $\F$, one can ask what is the minimum filtration level in which the homology is supported. In the case where the $\Z$-filtered chain complex in question is $\widehat{CFK}(K)$, this yields the Ozsv\'ath-Szab\'o $\tau$ invariant \cite{OS4ball}. Namely,
\[ \tau(K) = \min \{s \mid \iota \co C \{ i=0, j \leq s \} \rightarrow C\{ i=0 \} \textup{ induces a surjection on homology} \}. \]
Recall that $C\{ i=0 \} = \widehat{CF}(S^3)$ and $H_*(C\{ i=0\}) \cong \F$.

The $\tau$ invariant can also be defined in terms of $HFK^-(K)$: 
\begin{equation}\label{eqn:tauminus}
	\tau(K) = - \max \{ s \in \Z \mid \exists \; \theta \in HFK^-(K, s) \textup{ such that for all } n \in \N, \; U^n \theta \neq 0\}.
\end{equation}
See \cite[Appendix A]{OST} for the proof that these two definitions agree. Ozsv\'ath-Szab\'o \cite{OS4ball} show that 
\begin{itemize}
	\item $\tau(K_1 \# K_2) = \tau(K_1) + \tau(K_2)$, 
	\item $\tau(-K)=-\tau(K)$,
	\item $\tau(K)=0$ if $K$ is slice;
\end{itemize}
that is, $\tau$ induces a homomorphism from $\cC \rightarrow \Z$.

\begin{theorem}[\cite{OS4ball}]\label{thm:tau}
The invariant $\tau$ satisfies the following properties:
\begin{enumerate}
	\item The map $\tau \co \cC \rightarrow \Z$ is a surjective homomorphism.
	\item \label{it:taubound} The absolute value of $\tau$ gives a lower bound on the smooth four-ball genus, i.e., 
	\[ |\tau(K)| \leq g_4(K). \]
	\item If $K$ is quasi-alternating, then $\tau(K) = - \frac{\sigma(K)}{2}$.
	\item Let $T_{p,q}$ denote the $(p,q)$-torus knot. For $p,q \geq1$, we have $\tau(T_{p,q})=\frac{(p-1)(q-1)}{2}=g(T_{p,q})$.
	\item Let $K_+$ be a knot in $S^3$, and $K_-$ the new knot obtained by changing one positive crossing in $K_+$ to a negative crossing. Then 
		\[ \tau(K_+)-1 \leq \tau(K_-) \leq \tau(K_+). \]
\end{enumerate}
\end{theorem}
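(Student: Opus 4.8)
The homomorphism property of $\tau$, together with $\tau(-K) = -\tau(K)$ and $\tau(K)=0$ for slice $K$, has already been recorded, so for part (1) only surjectivity remains, and this I would deduce from part (4). Granting $\tau(T_{2,3})=1$, additivity gives $\tau(\#^{n} T_{2,3}) = n$ for $n \geq 0$, while $\tau(-K)=-\tau(K)$ gives $\tau(\#^{n}(-T_{2,3})) = -n$, so the image of $\tau$ is all of $\Z$. Thus the real content lies in parts (2)--(5), and I regard the four-ball genus bound (2) as the crux of the theorem.

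For (2) I would argue via cobordisms. A smooth slice surface $F \subset B^4$ of genus $g_4(K)$ bounding $K$ can, after deleting a small ball meeting $F$ in a trivial disk, be viewed as a connected cobordism in $S^3 \times [0,1]$ from $K$ to the unknot $U$ of genus $g_4(K)$. I would decompose this cobordism into elementary pieces (births, deaths, and saddle moves) and track how the Alexander filtration level of the generator surviving in $\widehat{HF}(S^3) \cong \F$ changes across each piece. Births and deaths introduce or remove split unknotted components and, by additivity and $\tau(U)=0$, leave $\tau$ unchanged; the essential point is that the two saddles assembling a single genus handle change $\tau$ by at most $1$, so that a genus-$g$ cobordism changes $\tau$ by at most $g$. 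This per-handle estimate is the main obstacle: it requires controlling the filtered maps on $\widehat{CF}(S^3)$ induced by an elementary cobordism, that is, how the surviving homology class moves in the Alexander filtration, and this is where the cobordism maps $F^\circ_{W,\mft}$ and an adjunction-type argument enter. Combining $\tau(K) \leq g_4(K)$ with $\tau(-K)=-\tau(K)$ and $g_4(-K)=g_4(K)$ then yields $|\tau(K)| \leq g_4(K)$.

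The crossing change inequality (5) I would treat as a sharpened, sign-sensitive version of the genus-$1$ case of (2). Since $K_+$ and $K_-$ differ by a single crossing, they cobound a genus-$1$ cobordism (two saddles through the oriented resolution), so (2) already gives $|\tau(K_+)-\tau(K_-)| \leq 1$. To upgrade this to $\tau(K_+)-1 \leq \tau(K_-) \leq \tau(K_+)$ I would exhibit explicit filtered chain maps between $\CFKh(K_+)$ and $\CFKh(K_-)$ coming from the two skein resolutions, one preserving and one shifting the Alexander filtration by $1$, and read off the asymmetric bound from the sign of the crossing being changed. In particular this recovers the unknotting-number bound on $\tau$.

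Finally, parts (3) and (4) are computations. For (4), positive torus knots are L-space knots, so by the structural result quoted after the discussion of $\CFKi$ (cf. \cite{OSlens}) the complex $\CFKi(T_{p,q})$ is a staircase determined by the Alexander polynomial; reading off the surviving non-torsion generator via \eqref{eqn:tauminus} gives $\tau(T_{p,q})=g(T_{p,q})=\frac{(p-1)(q-1)}{2}$. Together with (2) this pins $g_4(T_{p,q})=g(T_{p,q})$, recovering the Milnor conjecture, and in particular $\tau(T_{2,3})=1$, completing (1). For (3), a quasi-alternating knot has $\widehat{HFK}(K)$ supported on the single diagonal $M-s=\frac{\sigma(K)}{2}$ \cite{MOqa}; thinness leaves no room for the differentials in the spectral sequence converging to $\widehat{HF}(S^3)$ to move off this diagonal, so the unique surviving class, which has Maslov grading $0$, sits at Alexander grading $-\frac{\sigma(K)}{2}$, giving $\tau(K)=-\frac{\sigma(K)}{2}$.
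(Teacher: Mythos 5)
You should note at the outset that the paper contains no proof of this theorem: it is quoted from \cite{OS4ball}, and the only guidance given is the remark that part \eqref{it:taubound} ``relies on certain properties of maps on $\widehat{HF}$ induced by cobordisms,'' i.e.\ the identification of $\widehat{v}_{s,*}$ with the two-handle cobordism maps (Theorem \ref{thm:largeNhat}). Measured against the Ozsv\'ath--Szab\'o argument this points to, your route for (2) is genuinely different: they do not decompose the slice surface into births, deaths and saddles; instead they attach a single $N$-framed two-handle along $K$, cap the slice surface with the core of the handle to get a closed surface $\hat F$ in $W_N(K)$, and apply adjunction-type vanishing/nonvanishing for the maps $F_{W,\mft}$ in one global step, concluding that $\widehat{v}_{s,*}$ is nontrivial once $s \geq g_4(K)$, hence $\tau(K) \leq g_4(K)$. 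Your handle-by-handle scheme is instead the Rasmussen-style argument (as for the Khovanov $s$-invariant), later axiomatized for $\tau$ by Livingston; it can be made to work, but it is not the proof the cited reference gives. Your parts (1) and (3) are correct as sketched (for (3), the precise point is that the $E_\infty$ page of the spectral sequence is one-dimensional with Alexander grading $\tau(K)$ and Maslov grading $0$, and thinness forces $0 - \tau(K) = \sigma(K)/2$). Your part (4) via L-space staircases and \eqref{eqn:tauminus} is valid and non-circular, though again different from \cite{OS4ball}, where $\tau(T_{p,q})$ is computed from (2) and (5) by induction through crossing changes -- the staircase structure comes from the later lens-space-surgery paper.

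The genuine gaps are in (2) and (5), and they are not peripheral. First, the ``per-handle estimate'' that two saddles change $\tau$ by at most $1$ \emph{is} the theorem; you flag it as the main obstacle but supply no argument, so the proposal proves nothing beyond what it assumes. Second, your treatment of births and deaths ``by additivity'' fails as stated: a birth produces a split two-component link, not a connected sum, and $\tau$ as defined here is a knot invariant, so between the paired saddles of a genus handle the level sets are links and there is no $\tau$ to track. The standard repair is to put the cobordism in normal form and tube components, which converts a genus-$g$ cobordism from $K_1$ to $K_2$ into a genus-$g$ surface in $B^4$ bounded by $K_1 \# -K_2$ -- but note this reduces the cobordism statement \emph{to} the slice-genus statement, whereas you are trying to run the reduction in the opposite direction, so your scheme as written has no base case. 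Third, for (5) the asymmetric inequality is asserted to be readable ``from the sign of the crossing'' via skein chain maps, but no such maps are constructed; in \cite{OS4ball} this inequality is obtained from cobordism maps on a blow-up (a $(-1)$-framed two-handle attachment), again via the spin$^c$ nonvanishing properties. Moreover, the load-bearing estimate in your (2) and your (5) is the same unproved per-saddle statement, so as organized the two parts lean on each other rather than on an independent input; the cobordism-map technology of Theorem \ref{thm:largeNhat} is that independent input, and any completed write-up would need to actually deploy it.
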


\begin{remark}
Recall that Rasmussen defined an analogous invariant, $s(K)$, coming from Khovanov homology \cite{Rs} and showed  that $\frac{s(K)}{2}$ also satisfies the above properties. However, Hedden-Ording \cite{HeddenOrding} showed that $\tau(K) \neq \frac{s(K)}{2}$ in general.
\end{remark}

\begin{example}
We have that $\tau(T_{2,3})=1$. In Figure \ref{fig:CFKhatT23}, we can see that $a$ is a generator for $\widehat{HF}(S^3)$, and there is no other generator for $\widehat{HF}(S^3)$ of lower Alexander grading (denoted by the vertical axis). Equivalently, in Figure \ref{fig:CFKminusT23}, we see  that $U^n [c]\neq 0$ for all $n \in \N$, and that $[c] \in HFK^-(K, -1)$ is the element of maximal Alexander grading with that property.
\end{example}

The proof of Theorem \ref{thm:tau}\eqref{it:taubound} relies on certain properties of maps on $\widehat{HF}$ induced by cobordisms, which we discuss in the next subsection.

\subsection{Invariants related to Heegaard Floer homology of surgery along $K$}\label{subsec:HFsurgeryinvts}
Several concordance invariants can be extracted from maps on $HF^+$, $HF^-$, and $\widehat{HF}$ induced by the two-handle cobordism corresponding to integral surgery along $K$.

\subsubsection{Cobordism maps on $\widehat{HF}$}\label{sec:hatmaps}
We begin by considering the two-handle cobordism associated to sufficiently large $N$ surgery along $K$. (In general, $N \geq 2g(K)-1$ is large enough \cite{OSgenusbounds}.) Consider the chain map
\[ \widehat{v}_s \co C\{ \max(i, j-s)=0 \} \rightarrow C\{i=0\} \] 
consisting of quotienting by $C\{ i \leq 0, j=s\}$ followed by inclusion.

Let $W_N(K)$ denote the two-handle cobordism from $S^3$ to $S^3_N(K)$. Consider the cobordism $-W_N(K)$ from $S^3_N(K)$ to $S^3$ (that is, turn $W_N(K)$ upside down and reverse its orientation) with spin$^c$ structure $\mft$ such that 
\[ \langle c_1(\mft), [\hat{F}] \rangle +N = 2s \]
where $\hat{F}$ is a capped-off Seifert surface for $K$.

\begin{theorem}[{\cite[Theorem 4.4]{OSknots}, see also \cite[Theorem 2.3]{OSinteger}}]\label{thm:largeNhat}
For $|s| \leq N/2$, the complex $C\{ \max(i, j-s)=0 \}$ represents $\widehat{CF}(S^3_N(K), \mfs)$ and the map
\[ \widehat{v}_{s,*} \co \widehat{HF}(S^3_N(K), \mfs) \rightarrow \widehat{HF}(S^3) \]
is induced by the two-handle cobordism $-W_N(K)$ with spin$^c$ structure $\mft$ where $\mft|_{S^3_N(K)}=\mfs$.
\end{theorem}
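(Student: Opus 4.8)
The plan is to prove the statement in two logically separate steps: the \emph{large surgery identification} of the chain complex, and the \emph{identification of the cobordism map}. Both rest on choosing a single Heegaard diagram that is simultaneously adapted to $K$ and to the surgery, and then extracting information from holomorphic curve counts. I would carry them out in this order, since the second step presupposes the first.

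For the identification of the complex, I would begin with a doubly pointed Heegaard diagram $(\Sigma, \bfalpha, \bfbeta, w, z)$ for $(S^3, K)$ in which $\beta_g$ is a small meridian of $K$ separating $w$ from $z$. To obtain a diagram for $S^3_N(K)$, I would replace $\beta_g$ by a curve $\gamma_g$ that follows the $N$-framed longitude and winds $N$ times through a \emph{winding region} near the basepoints, leaving the other attaching curves unchanged. The new intersection points created in the winding region are indexed by an integer, and this index records the spin$^c$ structure $\mfs$ on $S^3_N(K)$; the normalization $\langle c_1(\mft), [\hat{F}]\rangle + N = 2s$ is exactly what matches this index to the integer $s$, with the range $|s| \le N/2$ enumerating all spin$^c$ structures. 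The heart of this step is a bifiltration bookkeeping argument: for $N$ sufficiently large (so that distinct spin$^c$ structures do not overlap in the winding region), the domains of the disks counted in the surgered differential split according to their multiplicities $n_w$ and $n_z$, and one checks that the generators surviving in $\mfs$ with $n_w = 0$, together with the induced differential, reassemble precisely into the subquotient complex $C\{\max(i, j-s) = 0\}$. This is where the ``hook'' shape $\{i = 0,\, j \le s\} \cup \{i \le 0,\, j = s\}$ comes from.

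For the identification of the map, I would realize the reversed two-handle cobordism $-W_N(K)$ by a Heegaard triple subordinate to the surgery, two of whose boundary three-manifolds are $S^3_N(K)$ and $S^3$. The induced map on $\widehat{HF}$ is then the holomorphic triangle map, counting index-zero triangles with $n_w = 0$ in the chosen spin$^c$ structure $\mft$. I would compare this count with the algebraic map $\widehat{v}_s$, which quotients $C\{\max(i,j-s)=0\}$ by $C\{i \le 0, j = s\}$ and includes the result into $C\{i = 0\} = \widehat{CF}(S^3)$. The comparison is carried out by a ``nearest-point'' argument in the winding region together with an area/energy filtration on triangles: after a suitable isotopy, the only index-zero, $n_w = 0$ triangles contributing in $\mft$ are the small combinatorial ones, and their mod-$2$ count realizes exactly the projection-then-inclusion $\widehat{v}_s$ on the chain level, hence $\widehat{v}_{s,*}$ on homology.

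The main obstacle is the triangle analysis in the second step. One must prove that no ``large'' holomorphic triangle contributes to the cobordism map in the relevant spin$^c$ structure, and that the surviving small triangles assemble into $\widehat{v}_s$ rather than into some merely chain-homotopic variant; this requires the standard but delicate degeneration and gluing arguments for pseudo-holomorphic triangles, with careful tracking of the two basepoint multiplicities $n_w$ and $n_z$. By contrast, the first step is essentially combinatorial once the winding diagram is fixed. In a survey, both steps may legitimately be replaced by appeals to \cite[Theorem 4.4]{OSknots} and \cite[Theorem 2.3]{OSinteger}, but the argument above is what those references establish.
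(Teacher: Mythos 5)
Your proposal is correct and takes essentially the same route as the paper, which offers no argument of its own but simply cites \cite[Theorem 4.4]{OSknots} and \cite[Theorem 2.3]{OSinteger}: your two-step outline --- identifying $\widehat{CF}(S^3_N(K), \mfs)$ with the hook-shaped subquotient $C\{\max(i,j-s)=0\}$ via a winding-region Heegaard diagram with spin$^c$ bookkeeping, and then identifying the map induced by $-W_N(K)$ with $\widehat{v}_s$ by counting holomorphic triangles and reducing to the small ones via a nearest-point/filtration argument --- is precisely the content of those cited proofs. Nothing further is needed, as you correctly note that in the survey context the appeal to the references is the intended proof.
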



Recall that $\widehat{HF}(S^3) \cong \F$. The map $\widehat{v}_{s,*}$ is trivial for $s < \tau(K)$ and non-trivial for for $s > \tau(K)$; this fact can be deduced from the definition of $\tau$ as in \cite[Proposition 3.1]{OS4ball}. Since $\widehat{HF}(S^3) \cong \F$, the map $\widehat{v}_{s,*}$ is non-trivial if and only if it is surjective.

The concordance invariant $\nu(K)$ is defined in \cite[Section 9]{OSrational} to be
\[ \nu(K) = \min \{ s \mid \widehat{v}_{s,*} \textup{ is surjective} \}. \]
It follows that $\nu(K)$ is equal to either $\tau(K)$ or $\tau(K)+1$.

Closely related to $\nu(K)$ is the $\{-1, 0, 1\}$-valued concordance invariant $\varep$ \cite{Homsmooth}.

\begin{definition}
The invariant $\varep$ is defined as follows:
\begin{itemize}
	\item If $\nu(K)=\tau(K)+1$, the $\varep(K) = -1$.
	\item If $\nu(K)=\tau(K)$ and $\nu(-K)=\tau(-K)$, then $\varep(K)=0$.
	\item If $\nu(-K)=\tau(-K)+1$, then $\varep(K)=1$.
\end{itemize}
\end{definition}
\noindent It follows from \cite[Section 3]{Homsmooth} that these three cases are exhaustive and mutually exclusive.

\begin{figure}[htb!]
\centering
\vspace{5pt}
\subfigure[]{
\begin{tikzpicture}[scale=1.0]
	\filldraw[black!20!white] (-0.15, -2) rectangle (0.15, -0.85);
	\filldraw[black!20!white] (-2, -1.15) rectangle (0.15, -0.85);
	\shade[top color=white, top color=black!20!white] (-0.15, -2) rectangle (0.15, -3);
	\shade[top color=white, right color=black!20!white] (-3, -1.15) rectangle (-2, -0.85);
	
	\begin{scope}[thin, gray]
		\draw [<->] (-3, 0) -- (2, 0);
		\draw [<->] (0, -3) -- (0, 2);
	\end{scope}
	\draw[step=1, black!30!white, very thin] (-2.9, -2.9) grid (1.9, 1.9);

	\filldraw (0, 1) circle (2pt) node[] (a2){};
	\filldraw (0, 0) circle (2pt) node[] (b2){};
	\filldraw (1, 0) circle (2pt) node[] (c2){};
	\draw [very thick, ->] (a2) -- (b2);
	\draw [very thick, ->] (c2) -- (b2);
	
	\filldraw (-1, 0) circle (2pt) node[] (a3){};
	\filldraw (-1, -1) circle (2pt) node[] (b3){};
	\filldraw (0, -1) circle (2pt) node[] (c3){};
	\draw [very thick, ->] (a3) -- (b3);
	\draw [very thick, ->] (c3) -- (b3);
	\node [right] at (c3) {$x_j$};
	
	\filldraw (-2, -1) circle (2pt) node[] (a4){};
	\filldraw (-2, -2) circle (2pt) node[] (b4){};
	\filldraw (-1, -2) circle (2pt) node[] (c4){};
	\draw [very thick, ->] (a4) -- (b4);
	\draw [very thick, ->] (c4) -- (b4);
	
	\filldraw (-2.5, -2.5) circle (0.8pt);
	\filldraw (-2.6, -2.6) circle (0.8pt);
	\filldraw (-2.7, -2.7) circle (0.8pt);

	\filldraw (1.2, 1.2) circle (0.8pt);
	\filldraw (1.3, 1.3) circle (0.8pt);
	\filldraw (1.4, 1.4) circle (0.8pt);
\end{tikzpicture}
}
\hspace{10pt}
\subfigure[]{
\begin{tikzpicture}[scale=1.0]
	\filldraw[black!20!white] (-0.15, -2) rectangle (0.15, 0.15);
	\filldraw[black!20!white] (-2, -0.15) rectangle (0.15, 0.15);
	\shade[top color=white, top color=black!20!white] (-0.15, -2) rectangle (0.15, -3);
	\shade[top color=white, right color=black!20!white] (-3, -0.15) rectangle (-2, 0.15);
	
	\begin{scope}[thin, gray]
		\draw [<->] (-3, 0) -- (2, 0);
		\draw [<->] (0, -3) -- (0, 2);
	\end{scope}
	\draw[step=1, black!30!white, very thin] (-2.9, -2.9) grid (1.9, 1.9);

	\filldraw (0, 1) circle (2pt) node[] (a2){};
	\filldraw (0, 0) circle (2pt) node[] (b2){};
	\filldraw (1, 0) circle (2pt) node[] (c2){};
	\draw [very thick, ->] (a2) -- (b2);
	\draw [very thick, ->] (c2) -- (b2);
	
	\filldraw (-1, 0) circle (2pt) node[] (a3){};
	\filldraw (-1, -1) circle (2pt) node[] (b3){};
	\filldraw (0, -1) circle (2pt) node[] (c3){};
	\draw [very thick, ->] (a3) -- (b3);
	\draw [very thick, ->] (c3) -- (b3);
	\node [right] at (c3) {$x_j$};
	
	\filldraw (-2, -1) circle (2pt) node[] (a4){};
	\filldraw (-2, -2) circle (2pt) node[] (b4){};
	\filldraw (-1, -2) circle (2pt) node[] (c4){};
	\draw [very thick, ->] (a4) -- (b4);
	\draw [very thick, ->] (c4) -- (b4);
	
	\filldraw (-2.5, -2.5) circle (0.8pt);
	\filldraw (-2.6, -2.6) circle (0.8pt);
	\filldraw (-2.7, -2.7) circle (0.8pt);

	\filldraw (1.2, 1.2) circle (0.8pt);
	\filldraw (1.3, 1.3) circle (0.8pt);
	\filldraw (1.4, 1.4) circle (0.8pt);
\end{tikzpicture}
\label{fig:}
}
\caption{Left, $\widehat{v}_{-1,*}$ is trivial. Right, $\widehat{v}_{0,*}$ is non-trivial. It follows that $\nu(K)=0$. Since $\tau(K)=-1$, we have that $\varep(K)=-1$.}
\label{fig:nuexample}
\end{figure}
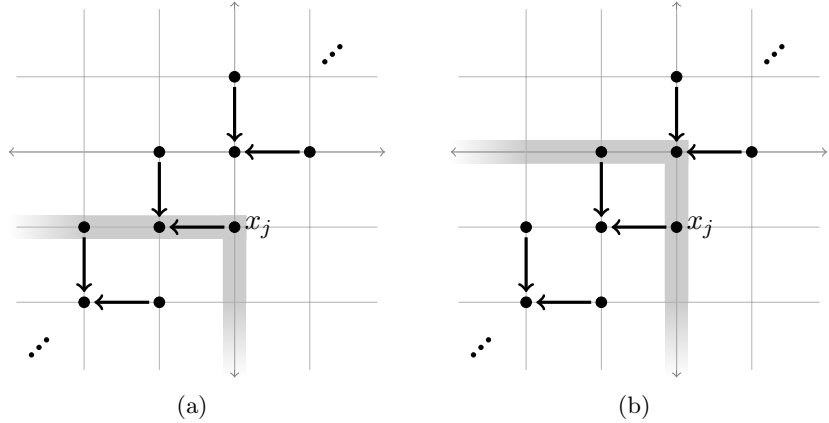

The invariant $\varep$ has an interpretation in terms of a certain type of simplified basis for $\CFKi$. We say a basis $\{x_i\}$ for $\CFKi$ is \emph{vertically simplified} if for each basis element $x_i$ exactly one of the following holds:
\begin{itemize}
	\item $\d^\vert x_i = x_j$ for some $x_j$,
	\item $x_i$ is in the kernel but not the image of $\d^\vert$,
	\item $x_i = \d^\vert x_j$ for a unique $x_j$.
\end{itemize}
In other words, the vertical differential cancels basis elements in pairs, except for a distinguished basis element which is a generator for the vertical homology. We define a \emph{horizontally simplified} basis analogously. Then by \cite[Lemmas 3.2 and 3.3]{HomTau}, there exists a horizontally simplified basis $\{x_i\}$ for $\CFKi(K)$ such that some $x_j$ is the distinguished generator of a vertically simplified basis. Then 
\begin{itemize}
	\item $\varep(K)=-1$ if $x_j$ is not in the kernel of the horizontal differential,
	\item $\varep(K)=0$ if $x_j$ is in the kernel but not the image of the horizontal differential,
	\item $\varep(K)=1$ if $x_j$ is in the image of the horizontal differential.
\end{itemize}
See Figure \ref{fig:epsilonexamples} for examples. 

\begin{remark}\label{rmk:epbasis}
Note that the first bullet point above implies that if $\varep(K)=-1$, then there is a horizontal arrow from $x_j$, and the third bullet point above implies that if $\varep(K)=1$, then there is a horizontal arrow going to $x_j$. The second bullet point above implies that if $\varepsilon(K)=0$, then $x_j$ has no incoming or outgoing horizontal or vertical arrows.
\end{remark}

\begin{figure}[htb!]
\centering
\vspace{5pt}
\subfigure[]{
\begin{tikzpicture}[scale=0.89]
	\begin{scope}[thin, gray]
		\draw [<->] (-2, 0) -- (2, 0);
		\draw [<->] (0, -2) -- (0, 2);
	\end{scope}
	\draw[step=1, black!30!white, very thin] (-1.9, -1.9) grid (1.9, 1.9);
	
	\filldraw (0, 1) circle (2pt) node[] (a2){};
	\filldraw (0, 0) circle (2pt) node[] (b2){};
	\filldraw (1, 0) circle (2pt) node[] (c2){};
	\draw [very thick, ->] (a2) -- (b2);
	\draw [very thick, ->] (c2) -- (b2);
	
	\filldraw (-1, 0) circle (2pt) node[] (a3){};
	\filldraw (-1, -1) circle (2pt) node[] (b3){};
	\filldraw (0, -1) circle (2pt) node[] (c3){};
	\draw [very thick, ->] (a3) -- (b3);
	\draw [very thick, ->] (c3) -- (b3);
	\node [right] at (c3) {$x_j$};
	
	\filldraw (-1.5, -1.5) circle (0.8pt);
	\filldraw (-1.6, -1.6) circle (0.8pt);
	\filldraw (-1.7, -1.7) circle (0.8pt);

	\filldraw (1.2, 1.2) circle (0.8pt);
	\filldraw (1.3, 1.3) circle (0.8pt);
	\filldraw (1.4, 1.4) circle (0.8pt);
\end{tikzpicture}
}
\hspace{10pt}
\subfigure[]{
\begin{tikzpicture}[scale=0.89]
	\begin{scope}[thin, gray]
		\draw [<->] (-2, 0) -- (2, 0);
		\draw [<->] (0, -2) -- (0, 2);
	\end{scope}
	\draw[step=1, black!30!white, very thin] (-1.9, -1.9) grid (1.9, 1.9);
	
	\filldraw (1, 1) circle (2pt) node[] (b1){};
	\filldraw (0, 0) circle (2pt) node[] (b2){};
	\node [right] at (b2) {$x_j$};	
	\filldraw (-1, -1) circle (2pt) node[] (b3){};
	
	\filldraw (-1.5, -1.5) circle (0.8pt);
	\filldraw (-1.6, -1.6) circle (0.8pt);
	\filldraw (-1.7, -1.7) circle (0.8pt);

	\filldraw (1.5, 1.5) circle (0.8pt);
	\filldraw (1.6, 1.6) circle (0.8pt);
	\filldraw (1.7, 1.7) circle (0.8pt);
\end{tikzpicture}
\label{fig:}
}
\hspace{10pt}
\subfigure[]{
\begin{tikzpicture}[scale=0.89]
	\begin{scope}[thin, gray]
		\draw [<->] (-2, 0) -- (2, 0);
		\draw [<->] (0, -2) -- (0, 2);
	\end{scope}
	\draw[step=1, black!30!white, very thin] (-1.9, -1.9) grid (1.9, 1.9);
	
	\filldraw (0, 1) circle (2pt) node[] (a2){};
	\filldraw (1, 1) circle (2pt) node[] (b2){};
	\filldraw (1, 0) circle (2pt) node[] (c2){};
	\draw [very thick, <-] (a2) -- (b2);
	\draw [very thick, <-] (c2) -- (b2);
	\node [left] at (a2) {$x_j$};
	
	\filldraw (-1, 0) circle (2pt) node[] (a3){};
	\filldraw (0, 0) circle (2pt) node[] (b3){};
	\filldraw (0, -1) circle (2pt) node[] (c3){};
	\draw [very thick, <-] (a3) -- (b3);
	\draw [very thick, <-] (c3) -- (b3);
	
	\filldraw (1.5, 1.5) circle (0.8pt);
	\filldraw (1.6, 1.6) circle (0.8pt);
	\filldraw (1.7, 1.7) circle (0.8pt);

	\filldraw (-1.2, -1.2) circle (0.8pt);
	\filldraw (-1.3, -1.3) circle (0.8pt);
	\filldraw (-1.4, -1.4) circle (0.8pt);
\end{tikzpicture}
\label{fig:}
}
\caption{Left, $\varepsilon=-1$. Center, $\varepsilon=0$. Right, $\varepsilon=1$.}
\label{fig:epsilonexamples}
\end{figure}
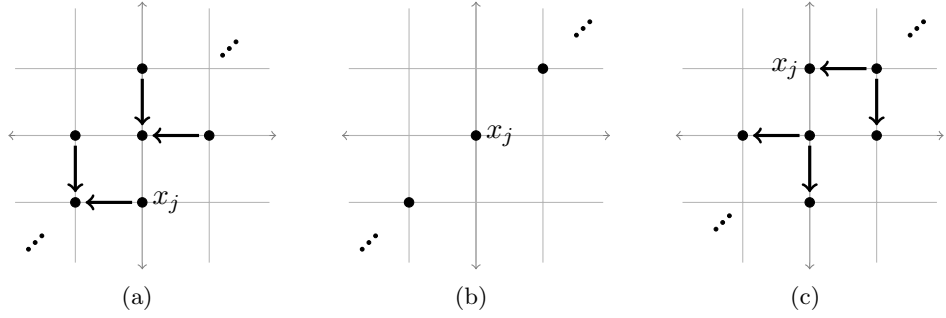

\begin{proposition}[{\cite[Proposition 3.6]{HomTau}}]
The invariant $\varep$ satisfies the following properties:
\begin{enumerate}
	\item If $K$ is slice, then $\varep(K)=0$.
	\item \label{it:eptau} If $\varep(K)=0$, then $\tau(K)=0$.
	\item $\varep(-K) = -\varep(K)$.
	\item If $K$ is quasi-alternating (or more generally, homologically thin), then $\varep(K)=\sgn \tau(K)$.
	\item If $g(K) = |\tau(K)|$, then $\varep(K)=\sgn \tau(K)$.
	\item If $\varep(K_1)=\varep(K_2)$, then $\varep(K_1 \# K_2) = \varep(K_1)$. If $\varep(K_1)=0$, then  $\varep(K_1 \# K_2) = \varep(K_2)$.
\end{enumerate}
\end{proposition}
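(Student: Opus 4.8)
The engine for the single-knot statements (1)--(5) is the basis characterization of $\varep$ recalled above: fix a horizontally simplified basis $\{x_i\}$ of $\CFKi(K)$ whose distinguished vertical generator is some $x_j$, and read off $\varep(K)$ according to whether $x_j$ has an outgoing horizontal arrow ($-1$), no horizontal arrow ($0$), or an incoming horizontal arrow ($+1$). I will also use the normalization that $x_j$, as a generator of the vertical homology $H_*(C\{i=0\})\cong \widehat{HF}(S^3)\cong \F$, has Alexander grading $A(x_j)=\tau(K)$ and algebraic grading $0$. Part (3) should be disposed of first, as it is immediate: since $\CFKi(-K)\simeq \CFKi(K)^*$ is obtained by rotating the planar picture by $180^\circ$ and reversing all arrows, the defining trichotomy for $\varep$ is manifestly symmetric under $K\leftrightarrow -K$ with the labels $\pm1$ interchanged, so $\varep(-K)=-\varep(K)$. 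For (2), suppose $\varep(K)=0$. Then $x_j$ lies in the kernel but not the image of $\d^{\horz}$, and being the unique such basis element it also generates the horizontal homology; hence it has no incoming or outgoing horizontal or vertical arrows and spans an isolated free summand $\F[U,U^{-1}]\langle x_j\rangle$ at bigrading $(0,\tau(K))$. The filtration-swap symmetry of $\CFKi(K)$ is a self-equivalence carrying this summand to an isolated summand at bigrading $(\tau(K),0)$, which after multiplication by $U^{\tau(K)}$ returns to the $i=0$ line at $(0,-\tau(K))$; since the reduced complex has a unique isolated generator, its Alexander grading is preserved, forcing $\tau(K)=-\tau(K)$, i.e.\ $\tau(K)=0$.

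Part (1) I would derive from Theorem \ref{thm:main}: if $K$ is slice then $\CFKi(K)\simeq \CFKi(U)\oplus A$ with $A$ acyclic. Taking vertical homology and using $H_*(C\{i=0\})\cong \F$ shows $H^{\vert}_*(\CFKi(K))\cong \F[U,U^{-1}]$ is free of rank one, and the same holds for $\CFKi(U)$; over the PID $\F[U,U^{-1}]$ this forces the complementary summand $A$ to be vertically acyclic, and symmetrically horizontally acyclic. A horizontally simplified basis for $A$ then cancels in pairs, and adjoining it to the single isolated generator of $\CFKi(U)$ gives a horizontally simplified basis for $\CFKi(K)$ whose distinguished vertical generator is the isolated $\CFKi(U)$-generator; hence $\varep(K)=\varep(U)=0$.

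For (4), if $K$ is homologically thin then $\widehat{HFK}(K)$ is supported on a single diagonal, and (by Petkova's theorem for quasi-alternating knots, and the analogue for thin knots) $\CFKi(K)$ decomposes up to filtered homotopy equivalence as one ``staircase'' summand together with finitely many ``box'' summands. The boxes are vertically and horizontally acyclic, so as in (1) they do not affect $\varep$, while the staircase is exactly the complex of $T_{2,2\tau(K)+1}$ (or its mirror when $\tau(K)<0$), for which one reads off $\varep=\sgn\tau(K)$ directly. For (5) I would reduce to $\tau(K)=g(K)=g>0$ using (3) together with $g(-K)=g(K)$. Then the distinguished vertical generator $x_j$ sits at the top Alexander grading $g$. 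The case $\varep(K)=0$ is excluded by (2), since $\tau(K)=g>0$. It remains to exclude $\varep(K)=-1$: an outgoing horizontal arrow from the top-grading generator is incompatible with the extremal structure forced by $\tau(K)=g$, which instead places $x_j$ as the top of a horizontal staircase and thus produces an \emph{incoming} horizontal arrow at $x_j$. This last step is the only nonformal point in (5) and requires a short analysis of the complex in top Alexander grading. We conclude $\varep(K)=1=\sgn\tau(K)$.

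Finally, part (6) is the crux and the main obstacle. Using $\CFKi(K_1\#K_2)\simeq \CFKi(K_1)\otimes_{\F[U,U^{-1}]}\CFKi(K_2)$, let $x$ and $y$ be the distinguished vertical generators of horizontally simplified bases of the two factors. Since vertical homology is multiplicative, $x\otimes y$ is the distinguished vertical generator of the tensor product, and the Leibniz rule gives
\[ \d^{\horz}(x\otimes y)=\d^{\horz}x\otimes y + x\otimes \d^{\horz}y. \]
If $\varep(K_1)=\varep(K_2)=-1$, the two terms are distinct nonzero basis elements, so $x\otimes y$ is not a horizontal cycle and $\varep(K_1\#K_2)=-1$; if $\varep(K_1)=\varep(K_2)=1$, writing $x=\d^{\horz}a$ and $y=\d^{\horz}b$ gives $x\otimes y=\d^{\horz}(a\otimes y)$ (as $\d^{\horz}y=0$), so $\varep(K_1\#K_2)=1$; and if $\varep(K_1)=0$ then $\d^{\horz}x=0$ collapses the formula to $x\otimes\d^{\horz}y$, which reproduces exactly the horizontal behavior of $y$ and yields $\varep(K_1\#K_2)=\varep(K_2)$. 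The genuine difficulty is that $\{x_i\otimes y_k\}$ is neither vertically nor horizontally simplified, so these formulas must be promoted to statements about an actual simplified basis: I would perform a filtered change of basis reducing the tensor product to horizontally simplified form while keeping $x\otimes y$ as the distinguished vertical generator and preserving the incoming/outgoing arrow identified above. Controlling this reduction---verifying that the change of basis neither creates nor destroys the relevant horizontal arrow at $x\otimes y$---is the technical heart of the whole proposition.
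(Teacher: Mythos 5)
The survey itself offers no proof of this proposition -- it is quoted from \cite[Proposition 3.6]{HomTau} -- so your attempt must be measured against the arguments there, which, like yours, run through the horizontally/vertically simplified basis characterization. Your parts (1)--(4) are essentially sound. For (3), note it is even more immediate from the definition of $\varep$ in terms of $\nu(K)$ and $\nu(-K)$, which is manifestly antisymmetric under $K \leftrightarrow -K$. Your route to (1) through Theorem \ref{thm:main} is legitimate and not circular, since that theorem rests on Proposition \ref{prop:basis} ($V_0$ and $\nu^-$), not on $\varep$; the proof in \cite{HomTau} is more direct ($K$ slice forces $\tau(\pm K)=0$ and $\nu(\pm K)=0$, so $\varep(K)=0$ by definition), but your free-rank argument showing the acyclic summand is vertically and horizontally acyclic is correct. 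Your (2) is the standard symmetry argument and works, provided you replace the loose phrase ``unique isolated generator'' with the invariant statement that the Alexander level of the isolated generator is the filtration level of the generator of vertical (respectively, after $i \leftrightarrow j$, horizontal) homology, which is how one gets $\tau = -\tau$. In (5), however, your exclusion of $\varep(K)=-1$ is asserted, not proved: ``placed as the top of a horizontal staircase'' is a description of the conclusion, not an argument. The missing step is short but specific: in a reduced basis, generators normalized to the column $i=0$ at Alexander grading $s$ correspond to a basis of $\widehat{HFK}(K,s)$, which vanishes for $s > g(K)$; an outgoing horizontal arrow from $x_j$ at $(0,g)$ would have to land on $U^m$ times a generator of Alexander grading $g+m > g$, so no target exists.

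The genuine gap is (6), and you have correctly diagnosed its location without filling it. Your Leibniz computations ($\d^\horz(x \otimes y) = \d^\horz x \otimes y + x \otimes \d^\horz y$, etc.) are statements about the tensor-product basis, i.e., about the associated graded (row) complexes; but $\varep$ is read off only from a horizontally simplified basis of the full complex whose distinguished element simultaneously generates vertical homology. Under the filtered changes of basis required to simplify $\CFKi(K_1) \otimes \CFKi(K_2)$, the distinguished vertical generator gets modified by other terms, and horizontal arrows at it can be created or cancelled: for instance, in the case $\varep(K_1)=\varep(K_2)=-1$, the fact that $x \otimes y$ is not a horizontal cycle does not yet rule out that some other cycle representing the same vertical homology class becomes the distinguished element of a horizontally simplified basis and is horizontally closed. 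The claim that the horizontal fate of one convenient vertical-homology-generating cycle determines $\varep$ is precisely what must be proved, and in \cite{HomTau} this occupies the bulk of the proof of (6): an explicit, case-by-case filtered change of basis taking the tensor basis toward simplified form while tracking the relevant arrow at $x \otimes y$. Since you explicitly defer this (``I would perform a filtered change of basis\dots is the technical heart''), part (6) -- which you rightly call the crux -- is outlined but not established, and the proposal as written does not constitute a proof of the proposition.
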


\begin{remark}
The converse to \eqref{it:eptau} is false; for example, if $K$ is the $(2, -3)$-cable (where the longitudinal winding is $2$) of the right-handed trefoil, then $\tau(K)=0$ but $\varep(K)=1$.
\end{remark}

The invariant $\varep$ is a key ingredient in the following theorem about $\cC_{TS}$, the subgroup of topologically slice knots.

\begin{theorem}[\cite{HomInfiniteRank}]\label{thm:Homsummand}
The group $\cC_{TS}$ contains a direct summand isomorphic to $\Z^\infty$.
\end{theorem}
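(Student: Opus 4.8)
The plan is to produce an explicit family of topologically slice knots $\{D_n\}_{n\geq 1}$ whose classes span a free subgroup $F\cong\Z^\infty\leq\cC_{TS}$, together with a retraction $r\co\cC_{TS}\to F$ splitting the inclusion; then $\cC_{TS}\cong F\oplus\ker r$ exhibits the summand. The engine throughout is $\varepsilon$ and its behavior under connected sum. First I would record that $\{K:\varepsilon(K)=0\}$ is a subgroup of $\cC$ (closed under inverses by property (3) and under connected sum by property (6) of the Proposition, and containing slice knots by property (1)), so that $\cC_\varepsilon:=\cC/\{K:\varepsilon(K)=0\}$ is a group carrying the total order $[K_1]\geq[K_2]\iff\varepsilon(K_1\#(-K_2))\in\{0,1\}$.

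For the family I would take $D_n$ to be the positive-clasped, untwisted Whitehead double of the torus knot $T_{2,2^n-1}$. Each $D_n$ has trivial Alexander polynomial and is therefore topologically slice by Freedman's theorem, so $[D_n]\in\cC_{TS}$. Since a Whitehead double has Seifert genus $1$ and, by Hedden's computation of $\tau$ for Whitehead doubles, $\tau(D_n)=1$ (using $\tau(T_{2,2^n-1})=2^{n-1}-1>0$, so that the untwisted double lies in the $\tau=1$ regime), property (5) of the Proposition gives $\varepsilon(D_n)=1$. The essential further input is a computation of enough of $\CFKi(D_n)$, via Hedden's description of the knot Floer homology of doubles together with the structure of the filtered complex: although all the $D_n$ are confined to a genus-one Alexander window, their internal (horizontal, i.e. algebraic-filtration) complexity grows with $n$, driven by the genus $g(T_{2,2^n-1})=2^{n-1}$. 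This gives the knots pairwise distinct ``scales.''

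Linear independence of $\{[D_n]\}$ in $\cC_\varepsilon$ would then follow from a dominance argument using the tensor-product formula $\CFKi(K_1\#K_2)\simeq\CFKi(K_1)\otimes\CFKi(K_2)$ and the duality $\CFKi(-K)\simeq\CFKi(K)^*$: for a nontrivial finite combination whose class I write additively as $\sum_n a_n[D_n]\in\cC_\varepsilon$, the scale separation forces its sign in the total order to be read off from the top term, i.e. $\sgn(a_N)$ with $N=\max\{n:a_n\neq0\}$. In particular no nontrivial combination is $\varepsilon$-trivial, so $F\cong\Z^\infty$. To upgrade this from a free subgroup to a summand, I would construct for each $n$ a homomorphism $\phi_n\co\cC_{TS}\to\Z$ that extracts the $D_n$-coefficient, built from $\varepsilon$-comparisons localized at scale $n$ (detecting how many copies of $D_n$ split off once contributions at scales other than $n$ are made invisible), and set $r=(\phi_n)_n$, checking that it lands in $\bigoplus_n\Z$.

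The main obstacle is exactly this last step: constructing the $\phi_n$ and verifying that they are genuine homomorphisms with $\phi_m(D_n)=\delta_{mn}$. Exhibiting linear independence is comparatively cheap, but a retraction demands functionals defined on \emph{all} of $\cC_{TS}$ that are simultaneously additive and insensitive to every scale but one. Proving additivity is where the multiplicativity of $\CFKi$ under connected sum and the explicit long-and-thin staircase structure of the $D_n$ must be combined, controlling how an arbitrary topologically slice connected summand can interfere with the scale-$n$ comparison. Once the $\phi_n$ are in place, $r$ is the desired retraction, the inclusion $F\hookrightarrow\cC_{TS}$ splits, and the theorem follows.
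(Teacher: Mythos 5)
Your high-level frame (pass to the quotient $\cCFK$ of \eqref{eqn:cCFK}, use the total order, exhibit independent ``scales,'' then split) is the same strategy this survey attributes to \cite{HomInfiniteRank} -- the theorem is not proved here, only the two main ideas are indicated: the order type of $\cCFK$ and the homomorphism $\cC \to \cCFK$. But your instantiation has a fatal flaw at the first substantive step: the choice of family. Every untwisted positive Whitehead double $D_n$ of $T_{2,2^n-1}$ has Seifert genus one and, by Hedden's computation, $\tau(D_n)=1=g(D_n)$, hence $\varep(D_n)=1$. The ``growing internal horizontal complexity'' you invoke lives in acyclic summands of $\CFKi(D_n)$, which is precisely the information that $\sim_\varep$ (and the stable equivalence of Theorem \ref{thm:main}) is built to discard: all the invariants of the $\varep$-class discussed in this survey ($\tau$, $\nu$, $\nu^-$, $V_0$, $\gamma$, the slopes of $\Upsilon$) are uniformly bounded on genus-one representatives, so nothing you have written rules out that all the $[D_n]$ coincide in $\cCFK$. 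Indeed, in \cite{HomInfiniteRank} it is shown that $\CFKi$ of the untwisted double of the trefoil is $\varep$-equivalent to $\CFKi(T_{2,3})$ itself, and the known linear independence of the doubles $D(T_{2,2^n-1})$ is due to Hedden and Kirk via instanton gauge theory, exactly because the Heegaard Floer concordance package does not separate them. This is why \cite{HomInfiniteRank} works instead with the knots $K_i = D_{i,i+1} \# -T_{i,i+1}$, where $D_{i,i+1}$ is the $(i,i+1)$-cable of the double of the trefoil (topologically slice since $D$ is topologically concordant to the unknot): cabling pushes the complexity out of the acyclic part, so the $\varep$-class of $K_i$ carries a parameter that genuinely grows with $i$. (Minor additional slip: your $n=1$ knot is the double of the unknot.)

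The second gap is the one you flag yourself, and flagging it does not close it: the retraction. A totally ordered abelian group containing a $\Z^\infty$ subgroup need not retract onto it, so ``functionals $\phi_n$ built from $\varep$-comparisons localized at scale $n$'' is a restatement of the problem rather than a construction, and it is the heart of the theorem -- linear independence alone (which was already known for topologically slice families by other means) does not give a summand. In \cite{HomInfiniteRank} this is where the detailed analysis of the order type of $\cCFK$ does real work: the classes of the $K_i$ are separated in an Archimedean sense (no multiple of the $i$\th{} class dominates the $(i+1)$\st{}), and this separation is parlayed into a homomorphism, defined on all of $\cC_{TS}$, onto $\bigoplus_{i=1}^\infty \Z$ sending $K_i$ to a generator; the splitting is then formal because $\bigoplus_{i=1}^\infty \Z$ is free abelian. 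Your proposal contains no substitute for this order-theoretic step, and with your family it could not even get started, since the required scale separation fails already in $\cCFK$.
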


\noindent Theorem \ref{thm:Homsummand} was reproved by Ozsv\'ath-Stipsicz-Szab\'o \cite{OSS} using the invariant $\Upsilon_K(t)$ described in Section \ref{sec:Upsilon}. The proof in \cite{HomInfiniteRank} relies on considering the group
\begin{equation}\label{eqn:cCFK}
	\cCFK := \Big(\{ \CFKi(K) \mid K \subset S^3\}, \#\Big)/ \sim_\varep
\end{equation}
where $C_1 \sim_\varep C_2$ if $\varep(C_1 \otimes C_2^*)=0$; we call this relation \emph{$\varep$-equivalence}. The group $\cCFK$ is totally ordered:
\[ [C_1] > [C_2] \qquad \textup{ if and only if } \qquad \varep(C_1 \otimes C_2^*) = 1. \]
Understanding the order type of $\cCFK$ and the homomorphism $\cC \rightarrow \cCFK$ defined by $[K] \mapsto [\CFKi(K)]$ are two of the main ideas behind the proof of Theorem \ref{thm:Homsummand}

\begin{remark}
Rather than $\{ \CFKi(K) \mid K \subset S^3\}$, we could instead consider the set of $\Z \oplus \Z$-filtered chain complexes of $\F[U]$-modules satisfying the symmetry and rank properties of $\CFKi$. It follows from \cite[Theorem 7]{HeddenWatson} that these two sets are different. However, it is an open question whether the quotients of these sets by $\sim_\varep$ are equal.
\end{remark}

The equivalence relation $\sim_\varep$ provides one approach to extracting as much concordance information as possible from $\CFKi$. As we will see below, there is still more concordance information in $\CFKi$ than what is captured by the $\varep$-equivalence class $[\CFKi(K)]$.

\subsubsection{Cobordism maps on $HF^-$}
We can similarly consider two-handle cobordism maps on $HF^+$ or $HF^-$, rather than on $\widehat{HF}$. By the argument in \cite[Proposition 2.3]{OSS}, the concordance information coming from $HF^+$ is identical to that coming from $HF^-$; see also \cite[Lemma 2.6]{HomLidman}. Thus, it is sufficient to consider the maps on, say, $HF^-$, as we do here.

As above, we consider sufficiently large $N$ surgery along $K$, where $N \geq 2g(K)-1$ suffices. Consider the chain map
\[ v^-_s \co C\{ \max(i, j-s) \leq 0 \} \rightarrow C\{i \leq 0\} \] 
defined by inclusion. The analogous result to Theorem \ref{thm:largeNhat} holds \cite[Theorem 4.4]{OSknots}. Namely, the complex $C\{ \max(i, j-s) \leq 0 \}$ represents $CF^-(S^3_N(K), \mfs)$ and the map 
\[ v^-_{s,*} \co HF^-(S^3_N(K), \mfs) \rightarrow HF^-(S^3) \]
is induced by the two-handle cobordism $-W_N(K)$ with spin$^c$ structure $\mft$ where $\mft |_{S^3_N(K)}=\mfs$. Note that for $s$ sufficiently large (by \cite{OSgenusbounds}, $s \geq g(K)$ suffices), the map $v^-_{s,*}$ is an isomorphism.

The concordance invariant $\nu^-(K)$ is
\[ \nu^-(K) = \min \{ s \mid v^-_{s,*} \textup{ is surjective} \}. \]
By \cite[Proposition 2.3]{OSS}, the invariant $\nu^-(K)$ is equal to the invariant $\nu^+(K)$ defined in \cite{HomWu}. Rasmussen \cite[Corollary 7.4]{R} shows that
\[ \nu^-(K) \leq g_4(K) \]
and the author and Wu \cite[Proposition 2.3]{HomWu} show that $\nu^-(K) \geq 0$ and
\begin{equation}\label{eqn:taunu}
	\tau(K) \leq \nu(K) \leq \nu^-(K).
\end{equation}
Moreover, the gap between $\tau$ and $\nu^-$ can be made arbitrarily large \cite[Theorem 1]{HomWu}: for any positive integer $n$, there exists a knot $K_n$ with $\tau(K_n) \geq 0$ such that
\[ \tau(K_n)+n \leq \nu^-(K_n) = g_4(K_n). \]
The invariant $\nu^-$ also satisfies a crossing change inequality \cite[Theorem 1.3]{BCG}:
\[ \nu^-(K_+) -1 \leq \nu^-(K_-) \leq \nu^-(K_+) \]
and is subadditive \cite[Theorem 1.4]{BCG}:
\[ \nu^-(K_1 \# K_2) \leq \nu^-(K_1) + \nu^-(K_2). \]

The maps $v^-_{s,*}$ yield further concordance invariants beyond $\nu^-$. Define
\[ V_s = \rank_\F (\coker v^-_{s,*}). \]
The sequence $\{V_s \}$ is non-increasing \cite[Lemma 2.4]{NiWu} and, together with the $d$-invariants of lens spaces, determines the $d$-invariants of Dehn surgery along $K$ \cite[Proposition 1.6]{NiWu}. In particular, the sequence $\{V_s\}$ is a concordance invariant and $V_0=-\frac{1}{2}d(S^3_1(K))$.  The concordance invariants $d(S^3_{\pm 1}(K))$ are studied by Peters in \cite{Peters}. Note that $\nu^- = \min \{ s \mid V_s = 0\}$ and so 
\begin{equation}\label{eqn:nuV0}
	\nu^-(K)=0  \qquad \textup{ if and only if } \qquad V_0(K)=0.
\end{equation}

\begin{proposition}\label{prop:basis}
If $V_0(K) = V_0(-K) = 0$, then there exists a basis for $\CFKi(K)$ with a basis element $x_j$ which generates the homology $HFK^\infty(K)$ and splits off as a direct summand of $\CFKi(K)$. In other words,
\[ \CFKi(K) \simeq \CFKi(U) \oplus A, \]
where $U$ denotes the unknot and A is some acyclic complex.
\end{proposition}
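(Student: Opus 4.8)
The plan is to reinterpret the two hypotheses as the existence of a cycle and a ``dual cycle'' pinned at filtration level $(0,0)$, and then to build an explicit filtered retraction of $\CFKi(K)$ onto the copy of $\CFKi(U)$ that they span. First I would unwind the definition of $V_0$. Since $C\{\max(i,j)\le 0\}=C\{i\le 0,\,j\le 0\}$ and $C\{i\le 0\}=CFK^-(K)$ has homology $\F[U]$, the condition $V_0(K)=0$ says exactly that $v^-_{0,*}$ is surjective, so the non-$U$-torsion generator of $H_*(CFK^-(K))$ is represented by a cycle $z$ supported in $\{i\le 0,\,j\le 0\}$; over $\F[U,U^{-1}]$ the class $[z]$ then generates $HFK^\infty(K)\cong\F[U,U^{-1}]$. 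The hypotheses also force $\tau(K)=0$: from $\tau\le\nu\le\nu^-$, $\nu^-\ge 0$, and $V_0(K)=0\iff\nu^-(K)=0$ we get $\tau(K)\le 0$, and the same applied to $-K$ gives $\tau(K)=-\tau(-K)\ge 0$. Knowing $\tau(K)=0$ pins down the leading term of $z$: its $i=0$ part is a vertical generator of $\widehat{HF}(S^3)$ sitting in Alexander grading $\tau(K)=0$, so $z$ has filtration level exactly $(0,0)$ and $\d z=0$.

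Next I would feed $V_0(-K)=0$ through the duality $\CFKi(-K)\simeq\CFKi(K)^*$. The same unwinding produces a cycle in $\CFKi(-K)$ at filtration $(0,0)$ generating its homology; under the identification of $\CFKi(-K)$ with $\textup{Hom}_{\F[U,U^{-1}]}(\CFKi(K),\F[U,U^{-1}])$, which negates filtration levels, this becomes a cocycle $\zeta\co\CFKi(K)\to\F[U,U^{-1}]$ that vanishes on boundaries, generates cohomology, and is supported on basis elements of filtration $\ge(0,0)$. Since $\zeta$ generates $H^*$ while $[z]$ generates $H_*$, the evaluation $\zeta([z])$ is a unit, so after rescaling I may assume $\zeta(z)=1$.

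Finally I would assemble the splitting. Let $\langle z\rangle$ be the $\F[U,U^{-1}]$-span of $z$, a copy of $\CFKi(U)$ with zero differential since $\d z=0$ and $z$ sits at $(0,0)$, and consider the inclusion $\iota\co\langle z\rangle\hookrightarrow\CFKi(K)$ together with $\pi=\zeta(\cdot)\,z\co\CFKi(K)\to\langle z\rangle$. One checks $\pi$ is a chain map, since both $\pi\d$ and $\d\pi$ vanish (using $\zeta|_{\textup{im}\,\d}=0$ and $\d z=0$), and that it is filtered: $\zeta(c)\ne 0$ forces $c$ to have a component of filtration $\ge(0,0)$, hence filtration level $\ge(0,0)$, which is the filtration level of $z$. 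As $\pi\iota=\textup{id}$, the complex splits as a filtered direct sum $\CFKi(K)\cong\langle z\rangle\oplus\ker\pi\cong\CFKi(U)\oplus A$, and $A=\ker\pi$ is acyclic because $\iota$ already realizes all of $HFK^\infty(K)$.

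The main obstacle is the bookkeeping in the first two steps: correctly translating $V_0(K)=0$ and $V_0(-K)=0$ into a cycle and a cocycle pinned at filtration $(0,0)$, for which establishing $\tau(K)=0$ and tracking how the duality $\CFKi(-K)\simeq\CFKi(K)^*$ reverses filtration levels are both essential. Once $z$ and $\zeta$ are in hand with $\zeta(z)=1$, the retraction argument is formal; the subtle point to double-check is the filtration-compatibility of $\pi$, which is precisely where the support condition $\zeta$-at-filtration-$\ge(0,0)$ (coming from $V_0(-K)=0$) meets $z$-at-filtration-$(0,0)$ (coming from $V_0(K)=0$ and $\tau(K)=0$).
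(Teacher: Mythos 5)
Your proposal is correct, and it reaches the proposition by a genuinely different mechanism than the paper, even though the inputs coincide: the paper also uses $V_0(K)=0$ to produce a cycle $x_j$ generating $H_*(C\{i \leq 0\})$, and it converts $V_0(-K)=0$, via Lemma \ref{lem:Vprime}, into injectivity of $v'^+_{0,*}(K)$ --- which is precisely the dual statement you package into the cocycle $\zeta$. Where the two part ways is the splitting mechanism. The paper deduces that $[x_j]$ is non-zero in the associated graded piece $C(0,0)$ (from the non-vanishing of the image of $x_j$ in $H_*(C\{\min(i,j) \geq 0\})$), extends $x_j$ to a filtered basis, and then removes incoming arrows one at a time by filtered changes of basis $x_\ell \mapsto x_\ell + x_j$, using that $x_j$ is not a boundary in $C\{\min(i,j)\geq 0\}$. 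You instead build the one-shot filtered retraction $\pi = \zeta(\cdot)\,z$ with $\pi\iota = \textup{id}$, so the acyclic summand $A = \ker \pi$ appears for free, with no basis manipulations and no filtered-basis extension lemma. Your detour through $\tau(K)=0$ (via $\tau \leq \nu \leq \nu^-$ applied to $\pm K$) plays exactly the role of the paper's $C(0,0)$ step: each guarantees that $z$ (resp.\ $x_j$) has a component at bifiltration level exactly $(0,0)$, so that the summand is an unshifted copy of $\CFKi(U)$. Your route is cleaner homological algebra; the paper's is more concrete and directly exhibits the distinguished basis element promised in the statement, in the same style as the simplified-basis arguments used for $\varep$.

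Two details in your sketch deserve care. First, the phrase ``$\zeta$ is supported on basis elements of filtration $\geq (0,0)$'' needs $U$-power bookkeeping: writing $\zeta = \sum_i f_i(U)\, x_i^*$ with $x_i$ at bifiltration $(a_i, b_i)$, the condition that each term lie in dual filtration level $\leq (0,0)$ says a monomial $U^{-n} x_i^*$ may appear only when $n \leq \min(a_i, b_i)$, and this is exactly what makes $\pi(x_i) = f_i(U)\,z$ filtered; as written, your argument reads as if the coefficients of $\zeta$ were constants. Second, ``after rescaling I may assume $\zeta(z)=1$'' is dangerous as stated: the units of $\F[U, U^{-1}]$ are the powers $U^n$, and rescaling $\zeta$ by $U^{-n}$ with $n > 0$ raises its filtration level and would destroy the filteredness of $\pi$ on which your whole splitting rests. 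The fix is the Maslov grading: $z$ and $\zeta$ may be chosen homogeneous of degree $0$ (both classes generate homology in degree $0$), while $U^n$ has degree $-2n$, so the unit $\zeta(z)$ is forced to equal $1$ and no rescaling is needed. With these two points patched, your retraction argument is a complete and valid alternative proof.
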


We first give a reformulation of $V_0(-K)$. Consider the chain map
\[ v'^+_s \co C\{ i \geq 0 \} \rightarrow C\{ \min(i, j-s) \geq 0 \} \]
consisting of quotienting by $C\{ j < s\}$; by \cite[Theorem 4.1]{OSknots} this induces the two-handle cobordism map for sufficiently large \emph{negative} surgery in a certain spin$^c$ structure.
Define
\[ V'_s = \rank_\F(\ker v'^+_{s,*}). \]

\begin{remark}
At times, we will be interested in the maps $v^-_{s,*}$ for both $K$ and $-K$, and so we write $v^-_{s,*}(K)$ or $v^-_{s,*}(-K)$ to indicate which knot we are considering. We will also write $v^-_{s,*}([x_i])$ to denote the image of $[x_i]$ under $v^-_{s,*}$. We hope that this abuse of notation serves to clarify, rather than confuse, the reader.
\end{remark}

\begin{lemma}\label{lem:Vprime}
We have that $V_s(K) = V'_{-s}(-K)$.
\end{lemma}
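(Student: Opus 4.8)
The plan is to derive the identity from the duality $\CFKi(-K)\simeq\CFKi(K)^*$ recorded earlier, under which a generator at filtration level $(i,j)$ is carried to a generator at $(-i,-j)$ and every differential is reversed. The first step is bookkeeping. Because reversing all arrows converts a downward-closed region (a subcomplex, which no differential leaves) into an upward-closed region (a quotient complex, which no differential enters), the subcomplex $C\{\max(i,j-s)\le 0\}=C\{i\le 0,\,j\le s\}$ of $\CFKi(K)$ is carried to the quotient complex $C\{i\ge 0,\,j\ge -s\}=C\{\min(i,j+s)\ge 0\}$ of $\CFKi(-K)$, while $C\{i\le 0\}$ is carried to $C\{i\ge 0\}$. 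Under this identification the inclusion $v^-_s(K)$ becomes a projection, and comparing with the definition of $v'^+$ I expect to recognize it as precisely $v'^+_{-s}(-K)\co C\{i\ge 0\}\to C\{\min(i,j+s)\ge 0\}$. In other words, $v'^+_{-s}(-K)$ is the $\F[U,U^{-1}]$-dual chain map of $v^-_s(K)$.

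The second step converts cokernels into kernels on homology. The key structural input is that the pairing of a generator with its dual induces, on homology, perfect $\F$-pairings $H_*(C\{i\le 0\})\otimes H_*(C\{i\ge 0\})\to\F$ and $H_*(C\{\max(i,j-s)\le 0\})\otimes H_*(C\{\min(i,j+s)\ge 0\})\to\F$ between the homologies of each complex for $K$ and of the dual complex for $-K$; these are the standard Kronecker-type dualities pairing $HF^-(S^3)\cong\F[U]$ with $HF^+(S^3)$, and $HF^-$ with $HF^+$ of the large surgered manifold and its orientation reversal. Since $v'^+_{-s}(-K)$ is dual to $v^-_s(K)$, the induced maps are adjoint under these pairings, so $\ker v'^+_{-s,*}(-K)=\big(\operatorname{im} v^-_{s,*}(K)\big)^{\perp}\cong\big(\coker v^-_{s,*}(K)\big)^{\vee}$. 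As $\coker v^-_{s,*}(K)$ is finite-dimensional over $\F$ of dimension $V_s(K)$, this gives $V'_{-s}(-K)=\rank_\F\ker v'^+_{-s,*}(-K)=\rank_\F\coker v^-_{s,*}(K)=V_s(K)$.

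The main obstacle is making the second step precise, since the naive $\F[U]$-module dual does not directly send cokernels to kernels (the $U$-torsion of $\coker v^-_{s,*}(K)$ is invisible to $\mathrm{Hom}_{\F[U]}(-,\F[U])$ and instead reappears through $\mathrm{Ext}^1$). The cleanest route is to bypass the $U$-module structure and work with the perfect $\F$-pairings above, verifying (i) that the generator-level duality really does induce these nondegenerate pairings on the homologies of the relevant subquotient complexes, and (ii) the adjunction $\langle v^-_{s,*}(K)a,\beta\rangle=\langle a,v'^+_{-s,*}(-K)\beta\rangle$, from which the annihilator identity $\ker v'^+_{-s,*}(-K)=(\operatorname{im}v^-_{s,*}(K))^{\perp}$ and hence the equality of $\F$-ranks follow formally. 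Both points are instances of the $HF^+(Y)\leftrightarrow HF^-(-Y)$ duality together with the naturality of cobordism maps under reversing orientation, so the work is in assembling these standard facts rather than in any new computation.
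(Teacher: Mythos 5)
Your proposal is correct and takes essentially the same route as the paper's proof: identify $\CFKi(-K)$ with the dual complex $\CFKi(K)^*$ (the $180^\circ$ rotation with arrows reversed), observe that this carries the inclusion $v^-_s(K)$ to the quotient map $v'^+_{-s}(-K)$, and conclude $\rank\bigl(\ker v'^+_{-s,*}(-K)\bigr) = \rank\bigl(\coker v^-_{s,*}(K)\bigr)$. The paper compresses your second step into a single ``hence''; your justification via nondegenerate $\F$-pairings and the annihilator identity (sidestepping the $\mathrm{Hom}_{\F[U]}(-,\F[U])$/$\mathrm{Ext}$ pitfall, which is harmless here since the cokernel is finite-dimensional over $\F$) is precisely the linear algebra the paper leaves implicit.
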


\begin{proof}
By \cite[Section 3.5]{OSknots}, we have that $\CFKi(-K)$ is filtered chain homotopy equivalent to $\CFKi(K)^*$, the dual of $\CFKi(K)$; see \cite[Section  2.1]{OSS} for the definition of the filtration on the dual complex. Given a depiction of $\CFKi(K)$ in the plane, as in Figure \ref{fig:epsilonexamples}, recall that we can obtain $\CFKi(K)^*$ from $\CFKi(K)$ by rotating the page $180^\circ$ and reversing the directions of all of the arrows. It follows that $v'^+_{-s,*}(-K)$ is dual to $v^-_{s,*}(K)$, hence $\rank(\ker v'^+_{-s,*}(-K)) = \rank (\coker v^-_{s,*}(K))$, as desired.
\end{proof}

We will use the following definition in the proof of Proposition \ref{prop:basis}. A \emph{filtered basis} for $\CFKi(K)$ is a basis $\{x_i\}$ for $\CFKi(K)$ with the property that $\{x_i\}$ induces a basis for the associated bigraded complex 
\[ \bigoplus_{(a,b) \in \Z \oplus \Z} C(a,b)\] 
where
\[ C(a,b)=C\{i \leq a, j \leq b\} / \big\langle C\{i \leq a, j \leq b-1 \} \cup C\{i \leq a-1, j \leq b \} \big\rangle. \]
We will also be interested in filtered change of basis; that is, a change of basis from one filtered basis to another. An example of a filtered change of basis is to replace a basis element $x_i$ with $x_i + x_j$, where $x_j$ has bifiltration level less than or equal to that of $x_i$. See \cite[Section 11.5]{LOT} for further discussion of filtered bases.

\begin{proof}[Proof of Proposition \ref{prop:basis}]
We will show that there exists a filtered basis for $\CFKi(K)$ with a distinguished basis element $x_j$ which has no incoming or outgoing arrows. Then $x_j$ will generate the desired direct summand.

If $V_0(K) = 0$, then $\nu^-(K)=0$ by \eqref{eqn:nuV0}. In particular, $v^-_{0,*}(K)$ is surjective, that is, there is a cycle $x_j \in C\{ \max (i, j) \leq 0 \}$ such that $v^-_{0,*}([x_j]) $ is a generator for $H_*(C\{ i \leq 0 \}) \cong F[U]$. Consider the chain map
\[ C\{ i \leq 0\} \rightarrow C\{ i \geq 0\} \]
induced by quotienting by $C\{ i < 0\}$ followed by inclusion. Under the map induced on homology, $1 \in H_*(C\{ i \leq 0 \}) \cong \F[U]$ is mapped to $1 \in H_*(C\{ i \geq 0 \}) \cong \F[U^{-1}]$. Therefore, since the cycle $x_j$ generates $H_*(C\{ i \leq 0 \})$, the image of $x_j$ in $H_*(C\{ i \geq 0 \})$ is non-trivial. By hypothesis, $V_0(-K)=0$, and so Lemma \ref{lem:Vprime} implies that $V'_0(K)=0$, i.e., $v'^+_{0,*}(K)$ in injective. In particular, $v'^+_{0,*}([x_j])$ is non-zero, and so $[x_j]$ is non-zero in $C(0,0)$.

It follows that we can find a filtered basis $\{x_i\}$ containing $x_j$ as a basis element. Since $[x_j]$ is non-zero in $H_*(C\{ \min(i,j) \geq 0\})$, it follows that $x_j$ is not in the image of the induced differential on $C\{ \min(i,j) \geq 0\}$. In particular, if there is an incoming arrow to $x_j$ from, say, a basis element $x_k$, then there is also another arrow from $x_k$ to some basis element $x_\ell$ in $C\{ \min(i,j) \geq 0\}$ such that bifiltration level of $x_\ell$ is greater than or equal to that of $x_j$. Thus, we can perform a filtered change of basis by replacing $x_\ell$ with $x_\ell + x_j$. This change of basis has the effect of removing the arrow from $x_k$ to $x_j$. Repeating this process as necessary, we may remove all of the incoming arrows to $x_j$. Since $x_j$ is a cycle, there are no outgoing arrows from $x_j$, and we have removed all of the incoming arrows. This completes the proof of the proposition.
\end{proof}

\begin{proof}[Proof of Theorem \ref{thm:main}]
Suppose that $K_1$ and $K_2$ are concordant. Then $K_1 \# -K_2$ is slice, so Proposition \ref{prop:basis} implies that
\[ \CFKi(K_1 \# -K_2) \simeq \CFKi(U) \oplus A, \]
for some acyclic complex $A$. Then 
\begin{align*}
	 \CFKi(K_1 \# -K_2 \# K_2) &\simeq (\CFKi(U) \oplus A) \otimes \CFKi(K_2) \\
	 &\simeq \CFKi(K_2) \oplus A' 
\end{align*}
for acyclic $A' = A \otimes \CFKi(K_2)$. Similarly, since $-K_2 \# K_2$ is slice, we have
\begin{align*}
	 \CFKi(K_1 \# -K_2 \# K_2) &\simeq \CFKi(K_1)\otimes \CFKi(-K_2 \# K_2) \\
	 &\simeq \CFKi(K_1)\otimes (\CFKi(U) \oplus A'') \\
	 &\simeq \CFKi(K_1) \oplus A'''.
\end{align*}
This concludes the proof of the theorem.
\end{proof}

\begin{remark}
Note that the invariants $\varep$ and $V_0$ are both invariants of suitably defined bifiltered chain complexes, and that $V_0(C)=V_0(C^*)=0$ implies that $\varep(C)=0$. The converse is false; see Figure \ref{fig:OSScomplex} which depicts the example with $\varep(C) = 0$ but $V_0(C) \neq 0$ from \cite[Proposition 9.4]{OSS}. Thus, rather than $\varep$-equivalence, one may consider the stronger equivalence relation on bifiltered chain complexes:
\[ C_1 \sim C_2 \qquad \textup{ if and only if } \qquad V_0(C_1 \otimes C_2^*)=V_0(C_1^* \otimes C_2)=0. \]
However, unlike the group $\cCFK$ in \eqref{eqn:cCFK}, the resulting group is not naturally totally ordered. Moreover, while it is known that there are complexes (e.g., Figure \ref{fig:OSScomplex}) for which $V_0(C) \neq 0$ but $\varep(C) = 0$, it remains open whether such complexes can be realized as $\CFKi$ of a knot $K \subset S^3$.
\end{remark}

In light of Proposition \ref{prop:basis} (see also Corollary \ref{cor:V0}), one might ask whether there are concordance obstructions which can be non-vanishing on knots with $V_0(K)=V_0(-K)=0$. Recent work of Hendricks and Manolescu \cite{HendricksManolescu} answers this question in the affirmative. They use the conjugation symmetry on the Heegaard Floer complex to define involutive Heegaard Floer homology, corresponding to $\Z_4$-equivariant Seiberg-Witten Floer homology. They obtain two new invariants of homology cobordism, $\dl$ and $\du$, satisfying
\[ \dl(Y, \mfs) \leq d(Y, \mfs) \leq \du(Y, \mfs). \]
They also obtain two new knot concordance invariants, $\Vl$ and $\Vu$, satisfying
\[ \Vu \leq V_0 \leq \Vl .\]
Unlike $\tau$, $\varep$, and $\nu$, which all vanish on knots of finite concordance order, the involutive invariants can be non-zero on such knots; for example, the figure-eight knot has $\Vl=1$ (and $V_0=\Vu=0$) \cite[Section 8.2]{HendricksManolescu}.

\begin{remark}
As far as the author knows, it remains an open question whether there are knots of finite concordance order for which $\nu^-$ and $V_0$ are non-vanishing. However, on the algebraic level, there do exist complexes $C$ with $\nu^-(C)$ and $V_0(C)$ non-zero but $\nu^-(C \otimes C)=V_0(C \otimes C)=0$, for example, the complex $C$ in Figure 3 of \cite{HomInfiniteRank}. (To see that $V_0(C \otimes C)=0$, note that $C$ is isomorphic to $C^*$.)
\end{remark}

Hendricks and Manolescu show that for homologically thin knots and L-space knots, the invariants $\Vl$ and $\Vu$ are completely determined by classical data (for thin knots, the Alexander polynomial and signature, and for L-space knots, the Alexander polynomial) \cite[Sections 7 and 8]{HendricksManolescu}.

\subsection{$d$-invariants of branched covers}
We can also consider the $d$-invariants of cyclic branched covers along $K$ rather than surgery. We will be interested in $\Sigma_{p^n}(K)$, the $p^n$-fold cyclic branched covers along $K$, where $p$ is a prime, since such covers are always rational homology spheres. Note that $\Sigma_{p^n}(K)$ has a canonical spin structure (see \cite[Definition 2.3]{Jabuka}), which we denote by $\mfs_0$.

Manolescu-Owens \cite{ManolescuOwens} consider $\delta(K)=2d(\Sigma_2(K), \mfs_0)$, and Jabuka \cite{Jabuka} considers the more general $\delta_{p^n}(K)=2d(\Sigma_{p^n}(K), \mfs_0)$. Hence the Manolescu-Owens $\delta$ is equal to $\delta_2$. We have that
\[ \delta_{p^n} \co \cC \rightarrow \Z \]
is a surjective concordance homomorphism. Jabuka \cite[Theorem 1.2]{Jabuka} shows that for a fixed $p$,
\[ \bigoplus_{n=1}^\infty \delta_{p^n} \co \cC \rightarrow \Z^\infty \]
is of infinite rank. Livingston \cite{Livingstoncomp} uses $\tau$, $s$, and $\delta$ to show that $\cC_{TS}$, the topologically slice subgroup of $\cC$, splits off a rank three direct summand, c.f., Theorem \ref{thm:Homsummand}.

In general, the invariant $\delta_{p^n}$ is difficult to compute. When $\Sigma_{p^n}(K)$ can described as surgery on a knot $J$ in $S^3$, one can use the knot Floer complex of $J$ together with \cite[Proposition 1.6]{NiWu} to compute $\delta_{p^n}(K)$; see, for example, \cite{HeddenLivRub} and \cite{Jabuka}. Moreover, similar techniques can be employed when the branched cover is surgery on a link; see \cite{HeddenKimLiv}. Alternatively, if $\Sigma_{p^n}(K)$ can be described as the boundary of a negative definite plumbing, then the results of \cite{OSplumbed} and \cite{Nemethi} (see also \cite{CanKarakurt}) can be used to compute $\delta_{p^n}(K)$. See \cite{ManolescuOwens} and \cite{Jabuka} for examples of applications of these techniques.


\subsection{Ozsv\'ath-Stipsicz-Szab\'o's $\Upsilon(t)$ invariant}\label{sec:Upsilon}
The invariant $\varep$ and the techniques of \cite{HomInfiniteRank}, as well as the invariants $\nu$, $\nu^-$, and $V_s$, are all attempts to extract tractable concordance invariants from $\CFKi$. Continuing in this vein is the Ozsv\'ath-Stipsicz-Szab\'o $\Upsilon_K(t)$ invariant, which takes values in the group of piecewise-linear functions on $[0,2]$. See also Livingston's notes on $\Upsilon_K(t)$ \cite{LivingstonUpsilon}.

\begin{theorem}[\cite{OSS}]
The invariant $\Upsilon_K(t)$ satisfies the following properties:
\begin{enumerate}
	\item $\Upsilon_K(2-t) = \Upsilon_K(t)$.
	\item $\Upsilon_K(0)=0$.
	\item $\Upsilon'_K(0) = -\tau(K)$.
	\item \label{it:Upsilonhom} $\Upsilon_{K_1 \# K_2}(t) = \Upsilon_{K_1}(t) + \Upsilon_{K_2}(t)$.
	\item $\Upsilon_{-K}(t) = - \Upsilon_{K}(t)$.
	\item \label{it:Upsilon4genus}For $t \in [0,1]$, we have $ | \Upsilon_K(t) | \leq t g_4(K)$.
	\item Let $K_+$ be a knot in $S^3$, and $K_-$ the new knot obtained by changing one positive crossing in $K_+$ to a negative crossing. Then for $t \in [0,1]$
		\[ \Upsilon_{K_+}(t) \leq \Upsilon_{K_-}(t) \leq \Upsilon_{K_+}(t) + 1. \]
\end{enumerate}
\end{theorem}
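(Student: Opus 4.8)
The plan is to read off the listed properties from the definition of $\Upsilon_K(t)$ together with the structural features of $\CFKi(K)$ already recalled in Section~\ref{sec:HF}. Recall that $\Upsilon_K(t)$ is extracted from $\CFKi(K)$ by equipping it, for each $t \in [0,2]$, with the $\R$-valued filtration assigning to an element supported at lattice position $(i,j)$ the weight $tj + (2-t)i$, forming for each $s$ the subcomplex $\mathcal{F}_s$ of elements of weight at most $s$ (this is a subcomplex because the differential is non-increasing in both coordinates and both coefficients are nonnegative), and recording, with the sign and normalization pinned down by $\Upsilon_K(0)=0$, the least $s$ for which the bottom tower generator $1 \in H_*(\CFKi(K)) \cong \F[U,U^{-1}]$ lies in the image of $H_*(\mathcal{F}_s) \to H_*(\CFKi(K))$. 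From this description, property (2) is a direct computation for the class represented at $i=0$, and property (1) follows because the substitution $t \mapsto 2-t$ interchanges the two coefficients, hence the two filtrations, and reversing the roles of the Alexander and algebraic filtrations yields a filtered chain homotopy equivalent complex (Section~\ref{sec:HF}); multiplication by $U$ shifts every weight by the constant $-2$, which is what makes these computations consistent.

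Properties (4) and (5) I would reduce to the isomorphisms $\CFKi(K_1 \# K_2) \simeq \CFKi(K_1) \otimes \CFKi(K_2)$ and $\CFKi(-K) \simeq \CFKi(K)^*$. The weight function is additive under tensor product and the tower generator of a tensor product is the tensor of the two tower generators, so tensoring minimizers shows in one direction immediately that $\Upsilon_{K_1 \# K_2}(t)$ is at least (resp.\ at most) $\Upsilon_{K_1}(t) + \Upsilon_{K_2}(t)$; the substantive point of (4) is the reverse inequality, namely that a minimizer for the product may be taken to respect the tensor decomposition. For (5), dualizing negates the weight and turns ``the image contains the bottom tower generator'' into the dual statement, producing the sign change. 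Property (3) then comes from a local analysis at $t=0$: for small $t$ the weight is dominated by the $i$-coordinate, so a minimizing cycle is forced onto the lowest algebraic level that still represents $1 \in HF^\infty(S^3)$, namely $i=0$; on that level the weight reduces to $t \cdot j$, and the least attainable maximal $j$ is exactly $\tau(K)$ by the definition of $\tau$. Hence $\Upsilon_K(t) = -t\,\tau(K) + o(t)$, and property (1) upgrades the one-sided derivative to $\Upsilon'_K(0) = -\tau(K)$.

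The main obstacle is the pair of geometric inequalities (6) and (7), which are invisible to the algebra of $\CFKi(K)$ and instead require the four-manifold cobordism maps underlying the $(3+1)$-TQFT structure from Section~\ref{sec:HF}. For the slice-genus bound (6) I would realize a genus-$g_4(K)$ slice surface, with an open disk removed, as a cobordism from the unknot $U$ to $K$ built from $2\,g_4(K)$ oriented band (saddle) moves, establish the key estimate that a single saddle move changes $\Upsilon_K(t)$ by at most $t/2$ for $t \in [0,1]$, and telescope from $\Upsilon_U(t)=0$ to conclude $|\Upsilon_K(t)| \le t\,g_4(K)$; the crux is the per-saddle estimate, which amounts to controlling the grading shift of the induced map on the $t$-weighted complexes. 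For the crossing-change inequality (7) I would present $K_+$ and $K_-$ as the two resolutions of a singular diagram and track the maps on the $t$-filtered complexes in both directions; the two-sided cobordism bound alone only gives $|\Upsilon_{K_+}(t) - \Upsilon_{K_-}(t)| \le t$, so obtaining the sharper one-sided inequality requires using the sign of the crossing change, which is the delicate part. It is worth emphasizing that (7) does not imply (6): bounding $\Upsilon$ through crossing changes yields only the unknotting-number bound $|\Upsilon_K(t)| \le t\,u(K)$, so the genus bound genuinely depends on the saddle-move analysis, and pinning down its precise grading shift is where the real work lies.
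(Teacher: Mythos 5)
A preliminary remark: this survey does not prove the theorem---it is quoted from \cite{OSS}---so your proposal must be measured against the original arguments of Ozsv\'ath--Stipsicz--Szab\'o and against Livingston's reformulation \cite{LivingstonUpsilon}, which is in fact the formulation you adopt. Note that the definition recorded in Section \ref{sec:Upsilon} is the $t$-modified complex $tCFK$ over $\F[v^{1/q}]$; the equivalence of that definition with your minimax description of the $t$-weighted filtration is itself a theorem (due to Livingston) and should be cited or proved, not assumed silently, especially since you then pin down the normalization by appealing to property (2), which is among the things being proved.

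For (1), (2), (3), (5) your outline is correct and follows the standard line; for (3), OSS in fact show $\Upsilon_K(t)=-t\tau(K)$ exactly for all sufficiently small $t$, and your remark that (1) ``upgrades'' the derivative is vacuous, since on $[0,2]$ the derivative at $0$ is one-sided by definition. For (4), the ``substantive point'' you leave open is not actually an obstacle: the subadditivity you already have, applied to $(K_1\# K_2)\#(-K_2)$ together with (5), yields the reverse inequality; alternatively, in the $tHFK$ formulation additivity is an immediate K\"unneth computation over the principal ideal domain $\F[v^{1/q}]$, with the torsion terms unable to contribute non-$v$-torsion classes, so no analysis of minimizers is needed at all.

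The genuine gap is in (6) and (7). Your per-saddle estimate is not even well-formed as stated: a single oriented band move applied to a knot produces a two-component \emph{link}, for which $\Upsilon$ as defined here does not exist, so the intermediate stages of your telescoping argument are unavailable. Repairing this requires either a link version of $\Upsilon$ or grouping saddles into genus-one pieces passing through knots and proving $|\Upsilon_K(t)-\Upsilon_{K'}(t)|\leq t$ per genus---but that is just (6) restated for genus-one cobordisms, so your ``crux'' defers precisely the content to be proved. The route indicated by this survey avoids saddles entirely: one proves $|\Upsilon_K(t)| \leq t \max(\nu^-(K),\nu^-(-K))$ using the large-surgery cobordism maps of Section \ref{subsec:HFsurgeryinvts}, and then invokes Rasmussen's bound $\nu^-(K)\leq g_4(K)$ \cite{R}. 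Similarly for (7): your two-sided bound $|\Upsilon_{K_+}(t)-\Upsilon_{K_-}(t)|\leq t$ presupposes (6) (via $g_4(K_+\#-K_-)\leq 1$ and properties (4), (5)), and while it does give the stated upper bound for $t\in[0,1]$, you offer no mechanism for the one-sided inequality $\Upsilon_{K_+}(t)\leq\Upsilon_{K_-}(t)$, where the sign of the crossing enters through the unknotting cobordism maps exactly as in the crossing-change proof for $\tau$ \cite{OS4ball}. Your closing observation that (7) does not imply (6) is correct.
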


Note that it follows from \eqref{it:Upsilonhom}--\eqref{it:Upsilon4genus} above that $\Upsilon$ is a homomorphism from the concordance group to the group of piecewise-linear functions on $[0,2]$. Other properties of $\Upsilon$ are that the derivative of $\Upsilon$ has only finitely many discontinuities, and that the derivative is always an integer \cite[Proposition 1.4]{OSS}.

\begin{remark}
Property \eqref{it:Upsilon4genus} above follows from the inequality
\[ |\Upsilon_K(t)| \leq t \max (\nu^-(K), \nu^-(-K) ). \]
In particular, the four-ball genus bound given by $\Upsilon_K(t)$ is no better than the bound coming from $\nu^-(\pm K)$.
\end{remark}

We briefly sketch the definition of $\Upsilon$. Fix a rational number $t=\frac{p}{q} \in [0,2]$. (See \cite[Section 3]{OSS} for the extension to the case of irrational $t$.) We will work over the ring $\F[v^{1/q}]$ where $v^2=U$. Consider the complex $tCFK(K)$ generated by the usual Heegaard Floer generators. The differential is defined to be
\[ \d_t \bfx = \sum_{\bfy \in \bbT_\alpha \cap \bbT_\beta} \sum_{\{ \phi \in \pi_2(\bfx, \bfy) \mid \mu(\phi)=1\} } \# \Big(\frac{\cM(\phi)}{\R}\Big) v^{tn_z(\phi)+(2-t)n_w(\phi)} \bfy. \]
The grading is defined to be
\[ \gr_t(\bfx) = M(\bfx) - tA(\bfx). \]
Multiplication by $v$ lowers $\gr_t$ by one, i.e., for $n \in \Z$
\[ \gr_t(v^{n/q} \bfx) = \gr_t(\bfx) - \frac{n}{q}. \]
The homology of $tCFK(K)$ is called \emph{$t$-modified knot Floer homology} and is denoted by $tHFK(K)$. Recall from \eqref{eqn:tauminus} that $\tau$ can be defined in terms of the maximal grading of a non-$U$-torsion element in $HFK^-$. Similarly, Ozsv\'ath-Stipsicz-Szab\'o \cite[Definition 3.6]{OSS} define $\Upsilon_K(t)$ to be the maximal $\gr_t$-grading of any homogenous non-$v$-torsion element of $tHFK(K)$.

\begin{remark}
Note that when $t=0$, we have $\d_t=\d^-$, and when $t=2$, we have effectively reversed the roles of $z$ and $w$.
\end{remark}

\begin{example}
We consider the diagram in Figure \ref{fig:HDtrefoil} for the right-handed trefoil $T_{2,3}$. The differential is
\[ 	\d_t a = 0 \qquad \quad	\d_t b = v^{2-t} a + v^t c \qquad \quad	\d_t c = 0, \]
and the gradings are
\[	\gr_t(a)=-t \qquad \quad	\gr_t(b)=-1 \qquad \quad	\gr_t(c)=-2+t. \] 
We see that both $[a]$ and $[c]$ are non-$v$-torsion elements of $tHFK(K)$, with $\gr_t(a) \geq \gr_t(c)$ for $0 \leq t \leq 1$ and $\gr_t(c) \geq \gr_t(a)$ for $1 \leq t \leq 2$. Thus,
\[
\Upsilon_{T_{2,3}}(t) = 
		\begin{cases}
			-t \quad & \text{for } 0 \leq t \leq 1 \\
			-2+t \quad & \text{for } 1 < t \leq 2.
		\end{cases}
\]
\end{example}

\section{Concordance genus bounds}\label{sec:concgen}
Recall that the \emph{concordance genus $g_c(K)$} of a knot $K$ is the minimal Seifert genus of any knot $K'$ which is concordant to $K$. Recall also that the knot Floer complex detects genus \cite{OSgenusbounds}; a knot $K$ with knot Floer complex $C=\CFKi(K)$ has genus
\[ g(C) = \max \{ j \mid H_*(C(0, j)) \neq 0\}. \]
Since concordant knots are $\varep$-equivalent, the invariant $\gamma(K)$ defined in \cite{HomConcGen} to be
\[ \gamma(K) = \min \{ g(C') \mid C' \sim_\varep \CFKi(K) \} \]
provides a lower bound on the concordance genus of $K$. The invariant $\gamma$ can be used to show that there are topologically slice knots with $g_4(K)=1$ but $g_c(K)$ arbitrarily large \cite[Theorem 3]{HomConcGen}; the analogous result for general knots was proven by Nakanishi \cite{Nakanishi} and for algebraically slice knots by Livingston \cite{Livingstonconcordancegenus}.

The invariant $\Upsilon_K(t)$ also provides a concordance genus bound. Let $s$ denote the maximum of the finitely many values of $\Upsilon'_K(t)$. Then by \cite[Theorem 1.13]{OSS}, 
\[ s \leq g_c(K). \]


\section{Comparison of the invariants}\label{sec:comparison}
We give a comparison of some of the invariants discussed above. We begin with $\varep$ and $\Upsilon(t)$. Proposition 9.4 of \cite{OSS} gives an example of a bifiltered chain complex $C$ with $\Upsilon_C(t) \not\equiv 0 $ but $\varep(C)=0$; see Figure \ref{fig:OSScomplex}. In the opposite direction, there exist knots $K$ for which $\varep(K) \neq 0$ but $\Upsilon_K(t) \equiv 0$ \cite{HomEpUp}. 

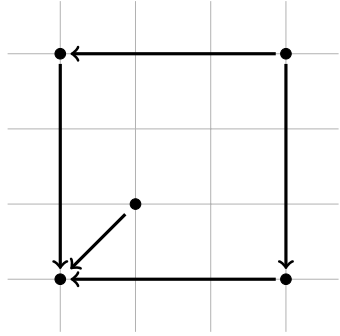
\begin{figure}[ht]
\centering
\begin{tikzpicture}[scale=1.0]
	\draw[step=1, black!30!white, very thin] (-0.7, -0.7) grid (3.7, 3.7);
	
	\filldraw (3, 3) circle (2pt) node[] (a){};
	\filldraw (3, 0) circle (2pt) node[] (b){};
	\filldraw (0, 3) circle (2pt) node[] (c){};
	\filldraw (0, 0) circle (2pt) node[] (e){};
	\filldraw (1, 1) circle (2pt) node[] (x){};
	\draw [very thick, <-] (b) -- (a);
	\draw [very thick, <-] (c) -- (a);
	\draw [very thick, <-] (e) -- (b);
	\draw [very thick, <-] (e) -- (c);
	\draw [very thick, <-] (e) -- (x);
\end{tikzpicture}
\caption{The complex $C$ from Proposition 9.4 of \cite{OSS}. It is unknown whether there is a knot $K$ for which $\CFKi(K) \simeq C$.}
\label{fig:OSScomplex}
\end{figure}

The invariants $\tau, \nu, \nu^-, V_s, \gamma, \varep$, and $\Upsilon(t)$ are all determined by $\CFKi$. 
Furthermore, we have the following corollary of Proposition \ref{prop:basis}:

\begin{corollary}\label{cor:V0}
If $V_0(K)=V_0(-K)=0$, then $\tau, \nu, \nu^-, V_s, \gamma, \varep$, and $\Upsilon(t)$ all vanish, i.e., agree with their values on the unknot.
\end{corollary}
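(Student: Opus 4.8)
The plan is to reduce Corollary \ref{cor:V0} to Proposition \ref{prop:basis}, which is the real content, and then verify that each listed invariant is unchanged under the addition of an acyclic summand together with a filtered chain homotopy equivalence. By hypothesis $V_0(K)=V_0(-K)=0$, so Proposition \ref{prop:basis} gives a filtered chain homotopy equivalence $\CFKi(K) \simeq \CFKi(U) \oplus A$ with $A$ acyclic. Thus it suffices to observe that every one of $\tau, \nu, \nu^-, V_s, \gamma, \varep$, and $\Upsilon(t)$ depends only on the filtered chain homotopy type of $\CFKi$, and that each is additive (or otherwise trivial) on acyclic summands, so that its value on $\CFKi(K)$ equals its value on $\CFKi(U)$.

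The first step is to recall that each invariant is defined purely in terms of the $\Z \oplus \Z$-filtered chain homotopy type of $\CFKi$: $\tau$ and $\nu$ come from the maps $\widehat{v}_{s,*}$ on the homology of filtered subquotients, $\nu^-$ and $V_s$ come from the maps $v^-_{s,*}$, $\varep$ is read off from a simplified filtered basis, and $\Upsilon_K(t)$ is the maximal $\gr_t$-grading of a non-$v$-torsion class in $tHFK$. Since $\gamma(K) = \min\{ g(C') \mid C' \sim_\varep \CFKi(K)\}$ is defined through the $\varep$-equivalence class, it too is a filtered chain homotopy invariant. Therefore all seven invariants are genuinely invariants of the complex up to filtered chain homotopy equivalence, and I may compute each of them on $\CFKi(U) \oplus A$ in place of $\CFKi(K)$.

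The second step is to show that an acyclic filtered summand contributes nothing to any of these invariants, i.e. that each invariant agrees on $\CFKi(U) \oplus A$ and on $\CFKi(U)$. For the $U$-localized picture this is essentially immediate for $\tau, \nu, \nu^-, V_s$: acyclicity of $A$ (so $H_*(A)=0$) means the relevant subquotient homologies and the cobordism maps $\widehat v_{s,*}, v^-_{s,*}$ are unaffected, as only the homology of the total complex and of the subquotients $C\{\max(i,j-s)\le 0\}$ etc.\ enter, and these are computed from $\CFKi(U)$ up to the acyclic piece. For $\varep$ one uses that a vertically/horizontally simplified basis for a direct sum may be taken as the union of such bases, and the distinguished generator $x_j$ lives in the $\CFKi(U)$ summand, where it has no incoming or outgoing arrows, giving $\varep = 0$; the $A$ summand's basis elements cancel among themselves. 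For $\Upsilon_K(t)$ one notes that $tCFK$ splits compatibly, and an acyclic summand carries no non-$v$-torsion homology, so the maximal $\gr_t$-grading of a non-$v$-torsion class comes from the $\CFKi(U)$ summand, yielding $\Upsilon \equiv 0$. Finally $\gamma$ vanishes because $\CFKi(K) \sim_\varep \CFKi(U)$ and $g(\CFKi(U)) = 0$.

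The main obstacle is really Proposition \ref{prop:basis}, which I am permitted to assume; granting it, the work here is bookkeeping about how direct summands and acyclic complexes interact with each invariant's definition. The one point requiring genuine care is $\Upsilon_K(t)$, since it is defined over $\F[v^{1/q}]$ rather than $\F[U]$ and I must confirm that the splitting $\CFKi(K) \simeq \CFKi(U)\oplus A$ induces a corresponding splitting of $tCFK$ that respects the $\gr_t$-grading and the $v$-torsion structure; once that is in place, acyclicity of $A$ (hence $v$-torsion-freeness contributing nothing) finishes the argument. I would conclude by remarking that all of these vanishing statements together say precisely that each invariant agrees with its value on the unknot.
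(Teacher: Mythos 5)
Your proposal is correct and takes essentially the same route as the paper: the paper's proof likewise applies Proposition \ref{prop:basis} to obtain $\CFKi(K) \simeq \CFKi(U) \oplus A$ with $A$ acyclic and then states that the vanishing of $\tau, \nu, \nu^-, V_s, \gamma, \varep$, and $\Upsilon(t)$ follows from their definitions. Your invariant-by-invariant bookkeeping (including the care taken with $\Upsilon$ over $\F[v^{1/q}]$ and the observation that the acyclic summand carries no non-torsion homology) is precisely the verification the paper leaves implicit.
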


\begin{proof}
Proposition \ref{prop:basis} implies that if $V_0(K)=V_0(-K)=0$, then $\CFKi(K) \simeq \CFKi(U) \oplus A$ for some acyclic complex $A$. The result now follows from the definitions of $\tau, \nu, \nu^-, V_s, \gamma, \varep$, and $\Upsilon(t)$.
\end{proof}

The Hendricks-Manolescu concordance invariants $\Vl$ and $\Vu$ are notable in that they need not vanish when $V_0(K)=V_0(-K)=0$ \cite[Section 8.2]{HendricksManolescu}. A natural question to ask is whether $\Vl$ and $\Vu$ are completely determined by $\CFKi$. As far as the author knows, this question remains open, although the answer is yes for certain families of knots: Hendricks and Manolescu \cite[Sections 7 and 8]{HendricksManolescu} show that $\Vl$ and $\Vu$ are indeed completely determined by $\CFKi$ for homologically thin knots (which includes all alternating knots) and L-space knots.

In addition to Corollary \ref{cor:V0}, recall that $\varep(K)=0$ implies $\tau(K)=0$, and $\Upsilon_K(t)=0$ implies that $\tau(K)=0$. As can be seen in Table \ref{tab:comparison}, the reverse implications do not hold.
We write $T_{m,n;p,q}$ to denote the $(p,q)$-cable of the $(m,n)$-torus knot, where $p$ denotes the longitudinal winding.

\begin{table}[htb!]
\begin{center}
\begin{tabular}{*{7}{@{\hspace{15pt}}c}}
\hline
 & $\tau$ & $\nu$ & $\nu^-$ & $V_0$ & $\varep$ & $\Upsilon(t)$ \\ \hline
 &&&&&& \\
$\CFKi(-T_{2,5}\# T_{4,5} \# -T_{2,3;2,5})$ & $0$ & $1$ & $1$ & $1$ & $-1$ & $\equiv 0$ \\ 
&&&&&& \\
$C$ & 0 & 0 & 2 & 2 & 0 & {$\left\{\! \openup-0.75\jot \begin{aligned} 
               2-3t &\textup{ if } 2/3 \leq t < 1 \\    
               -4+3t &\textup{ if } 1 \leq t < 4/3 \\  
               0 &\textup{ otherwise} \end{aligned}\right.$} \\ \hline
\end{tabular}
\end{center}
\caption{A comparison of some concordance invariants associated to $\CFKi$. The complex $C$ is depicted in Figure \ref{fig:OSScomplex}.}
\label{tab:comparison}
\end{table}


\bibliographystyle{amsalpha}

\bibliography{mybib}

\providecommand{\bysame}{\leavevmode\hbox to3em{\hrulefill}\thinspace}
\providecommand{\MR}{\relax\ifhmode\unskip\space\fi MR }
\providecommand{\MRhref}[2]{%
  \href{http://www.ams.org/mathscinet-getitem?mr=#1}{#2}
}
\providecommand{\href}[2]{#2}
\begin{thebibliography}{Hom15b}

\bibitem[BCG15]{BCG}
J\'ozsef Bodn\'ar, Daniele Celoria, and Marco Golla, \emph{A note on cobordisms
  of algebraic knots}, 2015, preprint, arXiv:1509.08821.

\bibitem[CK14]{CanKarakurt}
M.~B. Can and {\c{C}}.~Karakurt, \emph{Calculating {H}eegaard-{F}loer homology
  by counting lattice points in tetrahedra}, Acta Math. Hungar. \textbf{144}
  (2014), no.~1, 43--75.

\bibitem[Ghi08]{Ghiggini}
Paolo Ghiggini, \emph{Knot {F}loer homology detects genus-one fibred knots},
  Amer. J. Math. \textbf{130} (2008), no.~5, 1151--1169.

\bibitem[HHN13]{HHN13}
Stephen Hancock, Jennifer Hom, and Michael Newman, \emph{On the knot {F}loer
  filtration of the concordance group}, J. Knot Theory Ramifications
  \textbf{22} (2013), no.~14, 1350084, 30.

\bibitem[HKL12]{HeddenKimLiv}
Matthew Hedden, Se-Goo Kim, and Charles Livingston, \emph{Topologically slice
  knots of smooth concordance order two}, 2012, preprint, arXiv:1212.6628.

\bibitem[HL15]{HomLidman}
Jennifer Hom and Tye Lidman, \emph{A note on positive-definite, symplectic
  four-manifolds}, 2015, preprint, arXiv:1510.00373. To appear on J. Eur. Math.
  Soc. (JEMS).

\bibitem[HLR12]{HeddenLivRub}
Matthew Hedden, Charles Livingston, and Daniel Ruberman, \emph{Topologically
  slice knots with nontrivial {A}lexander polynomial}, Adv. Math. \textbf{231}
  (2012), no.~2, 913--939.

\bibitem[HM15]{HendricksManolescu}
Kristen Hendricks and Ciprian Manolescu, \emph{Involutive {F}loer homology},
  2015, preprint, arXiv:1507.00383.

\bibitem[HO08]{HeddenOrding}
Matthew Hedden and Philip Ording, \emph{The {O}zsv\'ath-{S}zab\'o and
  {R}asmussen concordance invariants are not equal}, Amer. J. Math.
  \textbf{130} (2008), no.~2, 441--453.

\bibitem[Hom14a]{HomTau}
Jennifer Hom, \emph{Bordered {H}eegaard {F}loer homology and the tau-invariant
  of cable knots}, J. Topol. \textbf{7} (2014), no.~2, 287--326.

\bibitem[Hom14b]{Homsmooth}
\bysame, \emph{The knot {F}loer complex and the smooth concordance group},
  Comment. Math. Helv. \textbf{89} (2014), no.~3, 537--570.

\bibitem[Hom15a]{HomInfiniteRank}
\bysame, \emph{An infinite-rank summand of topologically slice knots}, Geom.
  Topol. \textbf{19} (2015), no.~2, 1063--1110.

\bibitem[Hom15b]{HomConcGen}
\bysame, \emph{On the concordance genus of topologically slice knots}, Int.
  Math. Res. Not. IMRN (2015), no.~5, 1295--1314.

\bibitem[Hom16]{HomEpUp}
\bysame, \emph{A note on the concordance invariants epsilon and upsilon}, Proc.
  Amer. Math. Soc. \textbf{144} (2016), no.~2, 897--902.

\bibitem[HW14]{HeddenWatson}
Matthew Hedden and Liam Watson, \emph{On the geography and botany of knot
  {F}loer homology}, 2014, preprint, arXiv:1404.6913.

\bibitem[HW16]{HomWu}
Jennifer Hom and Zhongtao Wu, \emph{Four-ball genus bounds and a refinement of
  the {O}zs\'vath-{S}zab\'o tau-invariant}, J. Symplectic Geom. \textbf{14}
  (2016), no.~1, 305--323.

\bibitem[Jab12]{Jabuka}
Stanislav Jabuka, \emph{Concordance invariants from higher order covers},
  Topology Appl. \textbf{159} (2012), no.~10-11, 2694--2710.

\bibitem[Liv04a]{Livingstoncomp}
Charles Livingston, \emph{Computations of the {O}zsv\'ath-{S}zab\'o knot
  concordance invariant}, Geom. Topol. \textbf{8} (2004), 735--742
  (electronic).

\bibitem[Liv04b]{Livingstonconcordancegenus}
\bysame, \emph{The concordance genus of knots}, Algebr. Geom. Topol. \textbf{4}
  (2004), 1--22.

\bibitem[Liv05]{Livingston}
\bysame, \emph{A survey of classical knot concordance}, Handbook of knot
  theory, Elsevier B. V., Amsterdam, 2005, pp.~319--347.

\bibitem[Liv14]{LivingstonUpsilon}
\bysame, \emph{Notes on the knot concordance invariant {U}psilon}, 2014,
  preprint, arXiv:1412.0254.

\bibitem[LOT08]{LOT}
Robert Lipshitz, Peter Ozsv\'ath, and Dylan Thurston, \emph{Bordered {H}eegaard
  {F}loer homology: {I}nvariance and pairing}, 2008, preprint,
  arXiv:0810.0687v4.

\bibitem[Man14]{Manolescuknots}
Ciprian Manolescu, \emph{An introduction to knot floer homology}, 2014,
  preprint, arXiv:1401.7107.

\bibitem[MO07]{ManolescuOwens}
Ciprian Manolescu and Brendan Owens, \emph{A concordance invariant from the
  {F}loer homology of double branched covers}, Int. Math. Res. Not. IMRN
  (2007), no.~20.

\bibitem[MO08]{MOqa}
Ciprian Manolescu and Peter Ozsv{\'a}th, \emph{On the {K}hovanov and knot
  {F}loer homologies of quasi-alternating links}, Proceedings of {G}\"okova
  {G}eometry-{T}opology {C}onference 2007, G\"okova Geometry/Topology
  Conference (GGT), G\"okova, 2008, pp.~60--81.

\bibitem[Nak81]{Nakanishi}
Yasutaka Nakanishi, \emph{A note on unknotting number}, Math. Sem. Notes Kobe
  Univ. \textbf{9} (1981), no.~1, 99--108.

\bibitem[N{\'e}m05]{Nemethi}
Andr{\'a}s N{\'e}methi, \emph{On the {O}zsv\'ath-{S}zab\'o invariant of
  negative definite plumbed 3-manifolds}, Geom. Topol. \textbf{9} (2005),
  991--1042.

\bibitem[Ni07]{Ni}
Yi~Ni, \emph{Knot {F}loer homology detects fibred knots}, Invent. Math.
  \textbf{170} (2007), no.~3, 577--608.

\bibitem[Ni09]{Nifibred3}
\bysame, \emph{Heegaard {F}loer homology and fibred 3-manifolds}, Amer. J.
  Math. \textbf{131} (2009), no.~4, 1047--1063.

\bibitem[NW10]{NiWu}
Yi~Ni and Zhongtao Wu, \emph{Cosmetic surgeries on knots in {$S^3$}}, 2010,
  preprint, arXiv:1009.4720.

\bibitem[OS03a]{OSabsgr}
Peter Ozsv{\'a}th and Zolt{\'a}n Szab{\'o}, \emph{Absolutely graded {F}loer
  homologies and intersection forms for four-manifolds with boundary}, Adv.
  Math. \textbf{173} (2003), no.~2, 179--261.

\bibitem[OS03b]{OS4ball}
\bysame, \emph{Knot {F}loer homology and the four-ball genus}, Geom. Topol.
  \textbf{7} (2003), 615--639.

\bibitem[OS03c]{OSplumbed}
\bysame, \emph{On the {F}loer homology of plumbed three-manifolds}, Geom.
  Topol. \textbf{7} (2003), 185--224 (electronic).

\bibitem[OS04a]{OSsurvey}
\bysame, \emph{Heegaard diagrams and holomorphic disks}, Different faces of
  geometry, Int. Math. Ser. (N. Y.), vol.~3, Kluwer/Plenum, New York, 2004,
  pp.~301--348.

\bibitem[OS04b]{OSgenusbounds}
\bysame, \emph{Holomorphic disks and genus bounds}, Geom. Topol. \textbf{8}
  (2004), 311--334.

\bibitem[OS04c]{OSknots}
\bysame, \emph{Holomorphic disks and knot invariants}, Adv. Math. \textbf{186}
  (2004), no.~1, 58--116.

\bibitem[OS04d]{OS3manifolds2}
\bysame, \emph{Holomorphic disks and three-manifold invariants: properties and
  applications}, Ann. of Math. (2) \textbf{159} (2004), no.~3, 1159--1245.

\bibitem[OS04e]{OS3manifolds1}
\bysame, \emph{Holomorphic disks and topological invariants for closed
  three-manifolds}, Ann. of Math. (2) \textbf{159} (2004), no.~3, 1027--1158.

\bibitem[OS05a]{OSunknotting}
\bysame, \emph{Knots with unknotting number one and {H}eegaard {F}loer
  homology}, Topology \textbf{44} (2005), no.~4, 705--745.

\bibitem[OS05b]{OSlens}
\bysame, \emph{On knot {F}loer homology and lens space surgeries}, Topology
  \textbf{44} (2005), no.~6, 1281--1300.

\bibitem[OS05c]{OSbranched}
\bysame, \emph{On the {H}eegaard {F}loer homology of branched double-covers},
  Adv. Math. \textbf{194} (2005), no.~1, 1--33.

\bibitem[OS06a]{OSsmoothfour}
\bysame, \emph{Holomorphic triangles and invariants for smooth four-manifolds},
  Adv. Math. \textbf{202} (2006), no.~2, 326--400.

\bibitem[OS06b]{OSsurvey2}
\bysame, \emph{Lectures on {H}eegaard {F}loer homology}, Floer homology, gauge
  theory, and low-dimensional topology, Clay Math. Proc., vol.~5, Amer. Math.
  Soc., Providence, RI, 2006, pp.~29--70.

\bibitem[OS08]{OSinteger}
Peter~S. Ozsv{\'a}th and Zolt{\'a}n Szab{\'o}, \emph{Knot {F}loer homology and
  integer surgeries}, Algebr. Geom. Topol. \textbf{8} (2008), no.~1, 101--153.

\bibitem[OS11]{OSrational}
\bysame, \emph{Knot {F}loer homology and rational surgeries}, Algebr. Geom.
  Topol. \textbf{11} (2011), no.~1, 1--68.

\bibitem[OSS14]{OSS}
Peter Ozsv\'ath, Andr\'as Stipsicz, and Zolt{\'a}n Szab{\'o}, \emph{Concordance
  homomorphisms from knot {F}loer homology}, 2014, preprint, arXiv:1407.1795.

\bibitem[OST08]{OST}
Peter Ozsv{\'a}th, Zolt{\'a}n Szab{\'o}, and Dylan Thurston, \emph{Legendrian
  knots, transverse knots and combinatorial {F}loer homology}, Geom. Topol.
  \textbf{12} (2008), no.~2, 941--980.

\bibitem[Pet10]{Peters}
Thomas~David Peters, \emph{Computations of {H}eegaard {F}loer homology: {T}orus
  bundles, {L}-spaces, and correction terms}, Ph.D. thesis, Columbia
  University, 2010.

\bibitem[Pet13]{Petkova}
Ina Petkova, \emph{Cables of thin knots and bordered {H}eegaard {F}loer
  homology}, Quantum Topol. \textbf{4} (2013), no.~4, 377--409.

\bibitem[Ras03]{R}
Jacob Rasmussen, \emph{Floer homology and knot complements}, Ph.D. thesis,
  Harvard University, 2003.

\bibitem[Ras10]{Rs}
\bysame, \emph{Khovanov homology and the slice genus}, Invent. Math.
  \textbf{182} (2010), no.~2, 419--447.

\end{thebibliography}

\end{document}